\newtheorem{theorem}{Theorem}
\newtheorem{corollary}{Corollary}
\newtheorem{lemma}{Lemma}
\newtheorem{remark}{Remark}
\newtheorem{proposition}{Proposition}
\newcommand{\Cross}{$\mathbin{\tikz [x=1.4ex,y=1.4ex,line width=.2ex, red] \draw (0,0) -- (1,1) (0,1) -- (1,0);}$}
\title{\LARGE \bf
Target formation on the circle by monotone system design
}
\author{Cyrus Mostajeran, Jin Gyu Lee, Graham Van Goffrier and Rodolphe Sepulchre
\thanks{C. Mostajeran, J. G. Lee, and R. Sepulchre are with the Engineering Department of the University of Cambridge
        {\tt\small (csm54@cam.ac.uk)}}%
\thanks{G. Van Goffrier is with the Department of Physics and Astronomy, University College London
       }%
}
\begin{document}

\maketitle
\thispagestyle{empty}
\pagestyle{empty}

\begin{abstract}

Positivity and Perron-Frobenius theory provide an elegant framework for the convergence analysis of linear consensus algorithms. Here we consider a generalization of these ideas to the analysis of nonlinear consensus algorithms on the circle and establish tools for the design of consensus protocols that monotonically converge to target formations on the circle.
\end{abstract}

\section{INTRODUCTION}

Consensus problems and collective dynamics have been the subject of significant interest in the control community in recent decades with applications to cooperative and distributed control. Seminal works include \cite{Tsitsiklis1984,Jadbabaie2003,Olfati-Saber2004,Moreau2005}. See \cite{Dorfler2014} for a survey and  \cite{Moreau2005} for some examples of applications. More recently, there has been growing interest in the study of consensus algorithms defined on nonlinear spaces such as Lie groups \cite{Scardovi2007,Sarlette2010,Mostajeran2018}, the $n$-sphere \cite{Markdahl2018}, Grassmannians \cite{Sarlette2009}, and Stiefel manifolds \cite{Markdahl2020}. Consensus problems on nonlinear spaces
give rise to behaviours and global convergence issues that are not observed in linear models \cite{Sepulchre2011}. They are 
relevant for a number of engineering applications including the design of spatial coordinated motions \cite{Sepulchre2005}. Geometric consensus algorithms can be formulated intrinsically on a Riemannian manifold or extrinsically when the manifold is embedded in a Euclidean space . Most intrinsic consensus algorithms are based on the concepts of Riemannian distances, gradients, geodesics, and means. A fundamental challenge presented by consensus in nonlinear spaces is due to non-uniqueness of geodesics and topological properties of the underlying space, which result in problems that are fundamentally more complex and interesting than Euclidean analogues. 
Here, we will use an approach based on positivity theory and monotonicity to study consensus on the circle. In particular, we seek an answer to the following question: Can monotone system design be used to construct consensus algorithms that converge to a given target formation and collective motion on the circle?

Positivity theory plays an important role in the theory of dynamical systems with numerous applications to control engineering including to stabilization \cite{Farina2000,Leenheer2001}, observer design \cite{Hardin2007}, and distributed control \cite{Rantzer2011}, as well as the modelling of biological systems \cite{Angeli2004}. Linear positive systems are systems that leave a cone invariant. According to Perron-Frobenius theory, a linear system that is strictly positive, in the sense that it maps the boundary of a pointed convex solid cone into its interior, has a dominant one-dimensional eigenspace within the cone, which asymptotically attracts all trajectories inside the cone. A natural generalization of positivity to nonlinear systems is provided by the notion of \emph{differential positivity}, which is the property of systems whose linearizations along trajectories are positive with respect to a cone field \cite{Forni2015} and is closely related to monotonicity.
Strictly differentially positive systems infinitesimally contract a cone field along trajectories, constraining the asymptotic behavior to be one-dimensional under suitable technical conditions \cite{Forni2015,Mostajeran2018,Mostajeran2018monotonicity}. See \cite{Forni2019} for the closely related notion of $p$-dominance and its applications to differential dissipativity theory.

In this paper, we address the problem of designing consensus algorithms for a network of agents on the circle that converge to prescribed target formations. The solution is based on the design of a monotone system that guarantees convergence to a limit cycle corresponding to phase-locking behavior. In Section \ref{positivity and consensus}, we review the relevant technical background on linear positivity, consensus, and differential positivity. In Section \ref{consensus on circle}, we consider the application of differential positivity to consensus on the circle and describe conditions that guarantee convergence to phase-locked formations. In Section \ref{shaping}, we provide a solution to our main problem of designing systems that converge to prescribed target formations. We conclude with simulations and a brief discussion.

\section{Positivity, monotonicity, and consensus} \label{positivity and consensus}

\subsection{Linear positivity and consensus}

A linear system $\dot{x}=Ax$ on a vector space $\mathcal{V}$ is said to be positive with respect to a pointed convex solid cone $\mathcal{K}\subseteq \mathcal{V}$ if 
$e^{At}\mathcal{K}\subseteq\mathcal{K}$,
for all $t>0$, where $e^{At}\mathcal{K}:=\{e^{At}x:x\in\mathcal{K}\}$.
Continuous-time linear consensus algorithms take the form
$
\dot{x}=A(t)x,
$
where $A = (a_{ij})$ is a matrix whose rows sum to zero and whose off-diagonal elements are non-negative:
$A(t)\boldsymbol{1}=0$, and  $a_{ij}\geq 0$ for $i\neq j$.
Such continuous-time linear protocols arise from dynamics of the form
$\dot{x}_{i}=\sum_{j:(i,j)\in \mathcal{E}}a_{ij}(x_j-x_i)$
generated by $N$ agents exchanging information via a communication graph $\mathcal{G}$ with vertices and edges $(\mathcal{N}, \mathcal{E})$,
and are strictly positive with respect to the positive orthant $\mathcal{K}:=\mathbb{R}^N_+$ in $\mathbb{R}^N$ for a strongly connected graph. The projective distance to $\boldsymbol{1}$ given by the Hilbert metric of the positive orthant provides the Lyapunov function
\begin{equation*}
V(x)=\log{\max_i x_i \over \min_i x_i} = \max_i\log x_i - \min_i\log x_i,
\end{equation*}
which coincides with the well-known Tsitsiklis Lyapunov function in $\log$ coordinates. The Tsitsiklis Lyapunov function is non-increasing along solutions and the proof that it decreases strictly over a uniform horizon under appropriate assumptions can be established via elementary calculations \cite{Blondel2005}. The non-quadratic nature of the Tsitsiklis Lyapunov function is an essential feature of the convergence analysis of asymmetric and time-varying consensus algorithms. Indeed, \cite{Olshevsky2008} provides examples of matrices that satisfy the assumptions of a linear consensus algorithm but fail to admit a common time-invariant quadratic Lyapunov function.

\subsection{Differential positivity} 

Differential positivity can be defined on a manifold $\mathcal{M}$ equipped with a smooth cone field $\mathcal{K}(x)\subset T_x\mathcal{M}$. A continuous-time dynamical system $\Sigma$ is said to be differentially positive with respect to $\mathcal{K}$ if 
\begin{equation}
d\psi_t\vert_x\mathcal{K}(x)\subseteq\mathcal{K}(\psi_t(x)), \quad \forall x\in\mathcal{M}, \; \forall t\geq 0,
\end{equation}
where $\psi_t(x)$ is the flow at time $t$ from initial condition $x$ and $d\psi_t\vert_x$ denotes the differential of $\psi_t$ at $x$. The definition can be extended to strict differential positivity and uniformly strict differential positivity in obvious ways \cite{Mostajeran2018}.

Interestingly, differential positivity can be thought of as the local characterization of monotonicity \cite{Mostajeran2018,Mostajeran2018monotonicity}. Recall that a dynamical system $\Sigma$ on a vector space $\mathcal{V}$ endowed with a partial order $\preceq$ induced by some cone  $\mathcal{K}\subseteq\mathcal{V}$ is said to be monotone if for any $x_1, x_2\in\mathcal{V}$ the trajectories $\psi_{t}$ satisfy
$\psi_{t}(x_1)\preceq_{\mathcal{K}}\psi_{t}(x_2)$ whenever $x_1\preceq_{\mathcal{K}}x_2$,
for all $t > 0$. If $(x(\cdot),\delta x (\cdot))$ denotes a trajectory of the prolonged or variational system $\delta \Sigma$, then $\Sigma$ is monotone if and only if it is differentially positive.
In other words, the system is monotone if and only if for all $t > 0$, $\delta x(0)\in\mathcal{K} \Rightarrow \delta x(t)\in\mathcal{K}$. The infinitesimal characterization suggests a natural generalization to Lie groups, requiring differential positivity with respect to an invariant cone field \cite{Mostajeran2018,Mostajeran2018monotonicity}. 

In \cite{Forni2015}, the authors provide a generalization of Perron-Frobenius theory to nonlinear systems within a differential framework, whereby the Perron-Frobenius eigenvector of linear positivity theory is replaced by a Perron-Frobenius vector field $w(x)$ whose integral curves shape the attractors of the system. The main result on closed differentially positive systems is that the asymptotic behavior is either captured by a Perron-Frobenius curve $\gamma$ such that 
$\gamma'(s)=w(\gamma(s))$
at every point on $\gamma$ or is the union of the limit points of a trajectory that is nowhere aligned with the Perron-Frobenius vector field. For the purposes of this paper, the characterization provided by the following theorem will suffice \cite{Forni2015a}.

 \begin{theorem} \label{thm compact}
 Let $\Sigma$ be a uniformly strictly differentially positive system with respect to a cone field $\mathcal{K}(x)$ in a compact and forward invariant region $\mathcal{C}\subseteq\mathcal{M}$. If there exists a complete
 vector field $w$ satisfying $w(x)\in\operatorname{int}\mathcal{K}(x)\setminus\{0\}$ such that
$\limsup_{t\rightarrow\infty}|d\psi_t\vert_xw(x)|_{\psi_t(x)}<\infty$,
 and for all $x\in\mathcal{C}$ and $t\geq 0$:
 \begin{equation}
 w(\psi_t(x))={d\psi_t\vert_x w(x)\over|d\psi_t\vert_x w(x)|_{\psi_t(x)}},
 \end{equation}
 then there exists an integral curve of $w(x)$ whose image is an attractor for all the trajectories of $\Sigma$ from $\mathcal{C}$.
 \end{theorem}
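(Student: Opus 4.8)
\emph{Proof strategy.} The plan is to recast the assertion as a Perron--Frobenius/contraction argument on the projectivized cone bundle and then push it down to the manifold $\mathcal{M}$. First I would extract a \emph{uniform} contraction from uniform strict differential positivity. On a compact forward invariant $\mathcal{C}$, the property that $d\psi_t\vert_x$ maps $\mathcal{K}(x)$ into $\operatorname{int}\mathcal{K}(\psi_t(x))$ with a margin bounded away from the boundary uniformly in $x$ means the image cone has uniformly bounded Hilbert diameter; by the Birkhoff--Hopf theorem this yields $T>0$ and $\lambda\in(0,1)$ with
\[
d_{\mathcal{K}(\psi_T(x))}\bigl(d\psi_T\vert_x v_1,\; d\psi_T\vert_x v_2\bigr)\ \le\ \lambda\, d_{\mathcal{K}(x)}(v_1,v_2)
\]
for all $x\in\mathcal{C}$ and $v_1,v_2\in\mathcal{K}(x)\setminus\{0\}$, where $d_{\mathcal{K}(\cdot)}$ is the Hilbert metric of the cone field. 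Iterating over the flow, the projectivized variational dynamics on the bundle of rays inside $\mathcal{K}$ is a strict contraction along orbits; since the ray field $\mathbb{R}_+ w$ is, by the hypothesis of the theorem, invariant under this dynamics, it is the \emph{unique} invariant ray field and it attracts every cone direction. Quantitatively, $d_{\mathcal{K}(\psi_t(x))}\bigl(d\psi_t\vert_x v,\, w(\psi_t(x))\bigr)\to 0$ geometrically, uniformly over $x\in\mathcal{C}$ and $v\in\mathcal{K}(x)\setminus\{0\}$.

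Next I would convert this into a statement about orbits. Differentiating the Perron--Frobenius identity shows that the flow \emph{permutes the integral curves of $w$}: if $\gamma$ is an integral curve of $w$ then
\[
\tfrac{d}{ds}\,\psi_t(\gamma(s))\;=\;\bigl|d\psi_t\vert_{\gamma(s)} w(\gamma(s))\bigr|_{\psi_t(\gamma(s))}\; w\bigl(\psi_t(\gamma(s))\bigr),
\]
a strictly positive multiple of $w$, so $\psi_t$ maps the image of $\gamma$ onto the image of another integral curve of $w$ via a nondegenerate reparametrization. Consider then the global attractor $\Omega=\bigcap_{t\ge0}\psi_t(\mathcal{C})$, which is nonempty, compact and invariant. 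The transverse contraction of the first step collapses the family of $w$-leaves meeting $\mathcal{C}$ onto a single leaf contained in $\Omega$ --- this is the ``fixed point'' of the (contracting) action of $\psi_t$ on the leaf space of the $w$-foliation, and the Banach argument lives here once that leaf space is equipped with the induced transverse Hilbert metric. The hypothesis $\limsup_{t\to\infty}|d\psi_t\vert_x w(x)|_{\psi_t(x)}<\infty$ is precisely what keeps the reparametrizations from degenerating and guarantees that the limiting leaf is a \emph{complete} integral curve of $w$ rather than collapsing or escaping (completeness of $w$ itself ensuring the image is globally well defined); in the language of the general Perron--Frobenius theorem, it is the condition that excludes the ``nowhere aligned with $w$'' alternative. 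Finally, every trajectory from $\mathcal{C}$ satisfies $\operatorname{dist}(\psi_t(x),\Omega)\to0$ by compactness and forward invariance, so this single integral curve of $w$ is an attractor for all trajectories from $\mathcal{C}$, with rate governed by $\lambda$.

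The step I expect to be the main obstacle is the descent in the second part: the contraction estimate of the first part lives in the direction bundle, and turning it into genuine convergence of orbits on $\mathcal{M}$ requires integrating the transverse geometric decay over infinite time while controlling the longitudinal stretching along $w$ --- exactly where the $\limsup$ hypothesis is needed --- and then establishing that the attracting leaf of the $w$-foliation inside $\Omega$ is \emph{unique}. Additional care is required because $\mathcal{C}$, and in general $\Omega$, need not be invariant under the flow of $w$, so one has to reason with the permutation action of $\psi_t$ on $w$-leaves rather than with $w$-invariant subsets, and the leaf space of a foliation need not be Hausdorff; it is the \emph{uniformity} of the contraction over all of $\mathcal{C}$, together with compactness of $\Omega$, that ultimately forces existence and uniqueness of the attracting leaf.
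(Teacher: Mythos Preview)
The paper does not prove this theorem: it is quoted as background and attributed to \cite{Forni2015a} (Forni--Sepulchre), so there is no in-paper proof to compare against. Your outline is, however, precisely the strategy of that reference: use Birkhoff--Hopf on the Hilbert metric of the cone field to get a uniform projective contraction of the variational flow, identify $w$ as the unique invariant (hence attracting) ray field, then show the flow permutes $w$-leaves and use the transverse contraction together with the boundedness hypothesis on $|d\psi_t\vert_x w(x)|$ to collapse the $\omega$-limit onto a single integral curve of $w$.

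Your self-identified obstacle is the genuine one. The passage from directional contraction to orbital convergence is exactly where the cited proof spends its effort, and your sketch is honest about the gaps: you invoke a ``transverse Hilbert metric on the leaf space'' without constructing it, and you assert that the Banach argument lives there without verifying that the induced action is well defined on a complete metric space (leaf spaces of foliations are generically badly behaved). The original argument avoids this by working directly with $\omega$-limit sets inside $\mathcal{C}$ and showing they must be tangent to $w$ everywhere, rather than quotienting by the foliation. If you want to turn your outline into a proof, replace the leaf-space step with that $\omega$-limit argument: show that any point in $\Omega$ has its unstable direction aligned with $w$ (from the projective contraction), and then use compactness and the $\limsup$ bound to rule out the alternative that the limit set is a higher-dimensional invariant set nowhere tangent to $w$.
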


\section{Consensus on the circle} \label{consensus on circle}

Consider a network of $N$ agents evolving on the circle $\mathbb{S}^1$ according to
\begin{equation} \label{circle consensus}
\dot{\theta}_i=\omega_i +\sum_{j:(i,j)\in\mathcal{E}}f_{ij}(\theta_j-\theta_i),
\end{equation}
where $\theta_i\in\mathbb{S}^1$ represents the phase of agent $i$, $\omega_i\in\mathbb{R}$ are prescribed `intrinsic' frequencies, and $f_{ij}$ denotes an odd coupling function on the domain $(-\pi,\pi)$ extended to $\mathbb{R}$ in such a way so as to make it $2\pi$-periodic. Note that $f_{ij}$ and $f_{ji}$ need not be the same function. Let $\theta=(\theta_1,\ldots,\theta_N)$ denote an element of the $N$-torus $\mathbb{T}^N$ and consider the $N$-tuple of vector fields $
\left(\frac{\partial}{\partial\theta^1},\ldots,\frac{\partial}{\partial\theta^N}\right)
$, which defines a basis of left-invariant vector fields on $\mathbb{T}^N$.
Assuming that the coupling functions $f_{ij}$ are differentiable and strictly monotonically increasing on $(-\pi,\pi)$, then it can be shown that the linearization $\dot{\delta\theta} = A(\theta)\delta\theta$ of the system given by (\ref{circle consensus}) is uniformly strictly differentially positive on the set $\mathbb{T}^N_{\pi}=\{\theta\in\mathbb{T}^N:|\theta_j-\theta_i|\neq \pi,\;(i,j)\in\mathcal{E}\}$ with respect to the cone field
\begin{equation*}
\mathcal{K}_{\mathbb{T}^N}(\theta):=\bigg\{\delta \theta\in T_{\theta}\mathbb{T}^N: \delta \theta^i\geq 0, \, \delta\theta=\sum_i\delta\theta^i\frac{\partial}{\partial\theta^i}\bigg\},
\end{equation*}
 for any strongly connected communication graph. Furthermore, the Perron-Frobenius vector field of the system on $\mathbb{T}^N_{\pi}$ is the left-invariant vector field $\boldsymbol{1}(\theta)=(1,\ldots,1)\in T_{\theta}\mathbb{T}^{N}$, where the vector representation is given with respect to the basis defined by $\left(\frac{\partial}{\partial\theta^1},\ldots,\frac{\partial}{\partial\theta^N}\right)$. Moreover, if we denote the flow of (\ref{circle consensus}) by $\psi_t$, then 
 the condition $A(\theta)\boldsymbol{1}(\theta)=0$ implies that $d\psi_t\vert_{\theta}\boldsymbol{1}_{\theta}=\boldsymbol{1}_{\psi_t(\theta)}$, which ensures that $\limsup_{t\rightarrow\infty} |d\psi_t\vert_{\theta} \boldsymbol{1}(\theta)|_{\psi_t(\theta)}<\infty$ for any flow confined to $\mathbb{T}^N_{\pi}$. If we add the requirement that the coupling functions $f_{ij}$ and $f_{ji}$ be barrier functions on $(-\pi,\pi)$ so that $f_{ij}(\alpha)\rightarrow \infty$ and $f_{ji}(\alpha)\rightarrow \infty$ as $\alpha\rightarrow \pi$, then the flow $\psi_t$ will be forward invariant on $\mathbb{T}^N_{\pi}$, resulting in the following theorem \cite{Mostajeran2018}.

\begin{theorem} \label{consensus theorem}
Consider a network of agents on $\mathbb{S}^1$ communicating via a connected communication graph according to (\ref{circle consensus}). If the coupling functions $f_{ij}$ satisfy $f_{ij}(0)=0$, $f_{ij}(\alpha)\rightarrow \infty$ as $\alpha \rightarrow \pi$, and $f_{ij}'(\alpha)>0$ on $(-\pi,\pi)$, then every trajectory from $\mathbb{T}_{\pi}^N$ converges to an integral curve of the vector field $\boldsymbol{1}=\boldsymbol{1}(\theta)$.
\end{theorem}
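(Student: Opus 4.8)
\emph{Proof strategy.} The plan is to obtain the statement as a direct application of Theorem \ref{thm compact} with the Perron--Frobenius vector field $w=\boldsymbol{1}$. The main work has already been recorded in the discussion preceding the theorem. Under the standing hypothesis $f_{ij}'>0$ on $(-\pi,\pi)$, the linearization of (\ref{circle consensus}) is uniformly strictly differentially positive on $\mathbb{T}^N_{\pi}$ with respect to $\mathcal{K}_{\mathbb{T}^N}$, and $\boldsymbol{1}(\theta)\in\operatorname{int}\mathcal{K}_{\mathbb{T}^N}(\theta)\setminus\{0\}$ defines a complete vector field. Since (\ref{circle consensus}) is invariant under the diagonal rotation $\theta\mapsto\theta+s(1,\dots,1)$, the matrix $A(\theta)$ has zero row sums, i.e. $A(\theta)\boldsymbol{1}(\theta)=0$; this one identity disposes of two more hypotheses at once. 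It integrates to $d\psi_t\vert_{\theta}\boldsymbol{1}_{\theta}=\boldsymbol{1}_{\psi_t(\theta)}$, so, after normalizing $\boldsymbol{1}$ to a unit vector field (legitimate since $|\boldsymbol{1}_{\theta}|$ is constant in $\theta$ for the left-invariant metric), it is exactly the normalization relation demanded of $w$ in Theorem \ref{thm compact}, and it also makes $|d\psi_t\vert_{\theta}\boldsymbol{1}(\theta)|_{\psi_t(\theta)}$ constant, so the $\limsup$ condition holds. Finally, the barrier property $f_{ij}(\alpha),f_{ji}(\alpha)\to\infty$ as $\alpha\to\pi$ makes $\mathbb{T}^N_{\pi}$ forward invariant. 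Thus the single ingredient of Theorem \ref{thm compact} that still has to be produced is a \emph{compact} forward-invariant region inside $\mathbb{T}^N_{\pi}$.

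To produce one, I would fix an initial condition $\theta(0)\in\mathbb{T}^N_{\pi}$ and show that its forward orbit is precompact in $\mathbb{T}^N_{\pi}$, i.e. that the closure of $\{\psi_t(\theta(0)):t\ge 0\}$ is a compact subset of $\mathbb{T}^N_{\pi}$; equivalently, that the edge differences $e_{ij}(t)=\theta_j(t)-\theta_i(t)$, lifted to $(-\pi,\pi)$, stay bounded away from $\pm\pi$ along the solution. The mechanism is the usual one: as $e_{ij}\to\pi$, oddness of $f_{ij}$ and $f_{ji}$ makes the coupling across that edge contribute $-(f_{ij}(e_{ij})+f_{ji}(e_{ij}))\to-\infty$ to $\dot e_{ij}$, a strong restoring torque toward the interior. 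Granting this, the closure $\mathcal{C}$ of the forward orbit is forward invariant (the closure of a forward orbit always is), compact, and contained in $\mathbb{T}^N_{\pi}$, so Theorem \ref{thm compact} applies and yields an integral curve of $\boldsymbol{1}$ whose image attracts every trajectory from $\mathcal{C}$, in particular the one through $\theta(0)$. The integral curves of $\boldsymbol{1}$ are precisely the orbits $s\mapsto\theta(0)+s(1,\dots,1)$ of the diagonal-rotation symmetry, i.e. the phase-locked configurations in which every $\theta_j-\theta_i$ is constant, so the trajectory converges to such a curve; since $\theta(0)$ was an arbitrary point of $\mathbb{T}^N_{\pi}$, this is the assertion of the theorem. (Alternatively one may first quotient $\mathbb{T}^N$ by the diagonal $\mathbb{S}^1$-action, under which the dynamics descend to $\mathbb{T}^{N-1}$ and $\boldsymbol{1}$ collapses, and read the conclusion as convergence of the reduced system to an equilibrium.)

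I expect the precompactness step to be the main obstacle. The delicacy is that the barrier torques do not act on each edge in isolation: a coupling function that blows up fast near $\pi$ injects a large term into the equations of the neighbouring edges, so the naive candidate region $\{\theta:|e_{ij}(\theta)|\le c\ \forall (i,j)\in\mathcal{E}\}$ need not be forward invariant for $c$ close to $\pi$ once different $f_{ij}$ have different blow-up rates. A correct argument therefore has to exploit the network structure: a natural route is a weighted potential tailored to the $f_{ij}$ (in the spirit of the Tsitsiklis-type Lyapunov function of Section \ref{positivity and consensus}) whose sublevel sets are compact in $\mathbb{T}^N_{\pi}$ and non-increasing, or a direct argument that the $\omega$-limit set of any orbit from $\mathbb{T}^N_{\pi}$ cannot meet the boundary set $\{|\theta_j-\theta_i|=\pi\}$. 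Once precompactness is secured, the remainder is the bookkeeping above, feeding the already-verified hypotheses into Theorem \ref{thm compact}.
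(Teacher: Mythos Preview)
Your proposal takes essentially the same route as the paper: the paper's own argument is the paragraph immediately preceding the theorem, which checks uniform strict differential positivity on $\mathbb{T}^N_\pi$ with respect to $\mathcal{K}_{\mathbb{T}^N}$, identifies $\boldsymbol{1}$ as the Perron--Frobenius vector field via $A(\theta)\boldsymbol{1}(\theta)=0$ (hence $d\psi_t|_\theta\boldsymbol{1}_\theta=\boldsymbol{1}_{\psi_t(\theta)}$ and the $\limsup$ bound), uses the barrier hypothesis for forward invariance of $\mathbb{T}^N_\pi$, and then appeals to Theorem~\ref{thm compact}, with remaining details deferred to the cited reference. You are in fact more careful than the paper in isolating the compactness hypothesis of Theorem~\ref{thm compact} and proposing to secure it via precompactness of individual forward orbits; the paper does not make this step explicit.
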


\begin{remark}
Convergence to an integral curve of $\boldsymbol{1}$ on $\mathbb{T}^N$ corresponds to phase-locking behavior, whereby the collective motion asymptotically converges to movement in a fixed formation with frequency synchronization among the agents. Further details may be found in \cite{Mostajeran2018}.
\end{remark}

\begin{remark} \label{remark: repel}
The coupling functions in Theorem \ref{consensus theorem} correspond to attractive couplings with barriers at $\pi$ separation of connected agents. Since differential positivity only requires $f'_{ij}(\alpha)>0$, a similar result would hold for coupling functions that are odd,  $2\pi$-periodic and differentiable on $(0,2\pi)$, and satisfy $f_{ij}(\pi)=0$, $f'_{ij}(\alpha)>0$ for $\alpha \in (0,2\pi)$ and $f_{ij}(\alpha)\rightarrow -\infty$ as $\alpha \rightarrow 0^+$. In this model, all agents $\vartheta_k$ repel each other with strengths that monotonically decrease on $(0,\pi)$ and grow infinitely strong as the separation between any pair of connected agents approaches $0$. See Figure \ref{fig:coupling} for examples of attractive and repulsive coupling functions with the specified properties.
Indeed, one would retain differential positivity by mixing attractive and repulsive couplings. Forward invariance would be guaranteed provided that both directions associated with each connection are of the same type, i.e. attractive or repulsive, resulting in barriers at separations of $0$ or $\pi$ between connected agents.
\end{remark}

\begin{figure}
\centering
\includegraphics[width=0.85\linewidth]{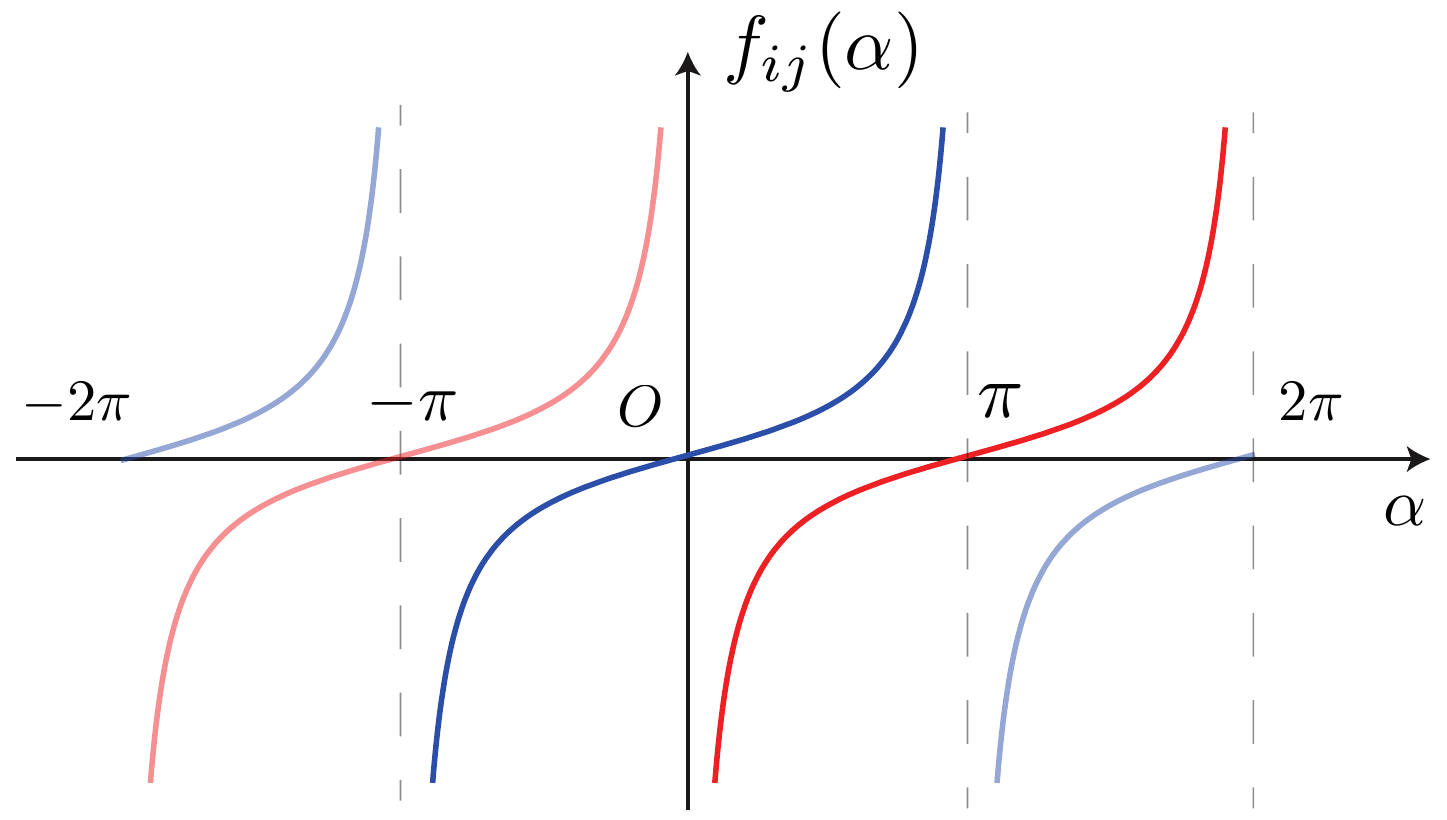}
  \caption{Attractive (blue) and repulsive (red) coupling functions $f_{ij}$ with the required monotonicity and barrier properties noted in Theorem \ref{consensus theorem} and Remark \ref{remark: repel}.
  }
  \label{fig:coupling}
\end{figure}

\subsection{Forward invariance and bidirectionality}

The limits $f_{ij}(\alpha)\rightarrow \infty$ as $\alpha \rightarrow \pi^-$ for attractive coupling functions and $f_{ij}(\alpha)\rightarrow -\infty$ as $\alpha \rightarrow 0^+$ for repulsive coupling functions are imposed to ensure forward invariance of differentially positive dynamics on the torus. We note however that this is only guaranteed if the connection between any pair of connected agents is bidirectional along the communication edge and of the same type. That is, we require for each $(i,j)\in\mathcal{E}$ that $\theta_i$ and $\theta_j$ interact via either attractive or repulsive couplings in either direction, even though this interaction need not be symmetric. If the communication graph is directed, we can construct examples where such coupling functions fail to ensure forward invariance in a region of differentially positive dynamics. This is best illustrated in the example of Figure \ref{fig:barrier}, where the agents are connected via a strongly connected directed graph and repel each other only in the direction of the arrows with a coupling function that grows infinitely strong at zero separation. If agents 3 and 4 move towards agent 1 with intrinsic frequencies of the opposite sign, they will cross the ``barrier" corresponding to agent 1. At the point of intersection, the infinite repulsions on agent 1 from agents 3 and 4 cancel, allowing the crossing to occur. Such pathological cases are avoided when the communication graph is bidirectional and of the same type along a given connection.

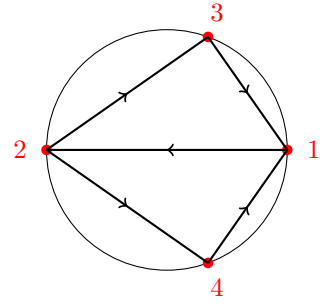
\begin{figure}
  \begin{center}
      \vspace{0cm}
    \begin{tikzpicture}[thick,decoration={
    markings,
    mark=at position 0.5 with {\arrow{>}}},
    ] 
        \coordinate (O) at (2,2);
        \def\radius{1.6cm}

        \draw[thin] (O) circle[radius=\radius];
        
        \path (O) ++(0:\radius) coordinate (a);
        \path (O) ++(180:\radius) coordinate (b);
        \path (O) ++(70:\radius) coordinate (c1);
        \path (O) ++(290:\radius) coordinate (c2);
        
        \fill[red] (a) circle[radius=2pt] ++(0:1em) node {$1$};
        \fill[red] (b) circle[radius=2pt] ++(180:1em) node {$2$};
        \fill[red] (c1) circle[radius=2pt] ++(70:1em) node {$3$};
        \fill[red] (c2) circle[radius=2pt] ++(290:1em) node {$4$};
        \draw[postaction={decorate}] (a) -- (b);
        \draw[postaction={decorate}] (b) -- (c1);
        \draw[postaction={decorate}] (c1) -- (a);
        \draw[postaction={decorate}] (b) -- (c2);
        \draw[postaction={decorate}] (c2) -- (a);

    \end{tikzpicture}
    \end{center}
    \vspace{-6pt}
    \caption{Directed connectivity graph for barrier-crossing example in which agents 3 and 4 cross agent 1 and consequently interchange positions.}
    \label{fig:barrier}
    \vspace{0cm}
\end{figure}

\subsection{Counting disconnected regions}


Repulsive and attractive coupling functions that ensure forward invariance of the consensus dynamics in $\mathbb{T}^N_0$ and $\mathbb{T}^N_{\pi}$, respectively, generally split the torus into a finite number of disconnected components determined by the communication graph. Given almost any initial configuration on the torus, the trajectory is attracted to a limit cycle
that is unique to the particular disconnected component corresponding to the initialization. Thus, even though the conditions of Theorem \ref{consensus theorem} guarantee almost global convergence to a limit cycle, the particular limit cycle may not be unique. The number of such behaviors is given by the number of disconnected components defined by the barrier functions and the communication graph topology. In this section, we address the problem of counting the number of such components.

Let $\mathcal{G}$ be an undirected communication graph with vertices and edges $(\mathcal{N}, \mathcal{E})$, where each $i \in \mathcal{N}$ represents an agent and an edge $(i,j) \in \mathcal{E}$ denotes that agents $i$ and $j$ are communicating; $|\mathcal{N}| = N$ and $|\mathcal{E}| = M$. Each agent $i$ has dynamics $\theta_i \in \mathbb{S}^1$, so we can define the configuration space of $\mathcal{G}$ to be the torus $\mathbb{T}^N$.

\subsubsection{Repulsive Case}

In the case of repulsive communication, coupling functions are required to grow infinitely strong at $0$ separation, so that for any edge $(i,j) \in \mathcal{E}$, $|\theta_i - \theta_j| \neq 0$. This restricts $\mathbb{T}^N$ to $\mathbb{T}^N_0 \equiv \{\theta \in \mathbb{T}^N : |\theta_i - \theta_j| \neq 0, \forall (i,j) \in \mathcal{E} \}$. Here we investigate the number of disconnected regions $R_0(\mathcal{G})$ in $\mathbb{T}^N_0$ for general graphs $\mathcal{G}$.


\begin{proposition}
For tree graphs $T$ and circular graphs $C_N$, $R_0(T) = 1$ and $R_0(C_N) = N-1$.
\end{proposition}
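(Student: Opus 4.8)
The plan is to reduce both claims to counting the connected components of a box in $\mathbb{R}^{N-1}$ with finitely many parallel hyperplanes removed, by passing to ``relative phase'' coordinates. I would first treat the tree case. Root the tree $T$ at a vertex $v_0$, list the remaining vertices as $v_1,\dots,v_{N-1}$ in an order compatible with the tree partial order (each vertex listed after its parent), and write $p(v_k)$ for the parent of $v_k$. The map $\theta\mapsto(\theta_{v_0},\,\theta_{v_1}-\theta_{p(v_1)},\dots,\theta_{v_{N-1}}-\theta_{p(v_{N-1})})$ is given on the universal cover $\mathbb{R}^N$ by an integer matrix that is triangular with $\pm 1$ on the diagonal, hence has determinant $\pm1$ and descends to a diffeomorphism of $\mathbb{T}^N$. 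Since the edge set of $T$ is exactly $\{(v_k,p(v_k)):1\le k\le N-1\}$, under this diffeomorphism $\mathbb{T}^N_0$ corresponds to $\mathbb{S}^1\times(\mathbb{S}^1\setminus\{0\})^{N-1}$. Each factor $\mathbb{S}^1\setminus\{0\}$ is an open arc, hence connected, so the product is connected and $R_0(T)=1$.

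For the cycle $C_N$ with vertices $1,\dots,N$ and edges $(i,i+1)$ (indices mod $N$), I would introduce $\varphi_i=\theta_{i+1}-\theta_i$ for $i=1,\dots,N-1$; again $\theta\mapsto(\theta_1,\varphi_1,\dots,\varphi_{N-1})$ is a unipotent integer change of coordinates, hence a diffeomorphism of $\mathbb{T}^N$, and the phase difference across the remaining edge is $\theta_1-\theta_N=-(\varphi_1+\dots+\varphi_{N-1})$. The conditions $\theta_i\ne\theta_{i+1}$ for all $N$ edges thus become $\varphi_i\ne 0$ for $i=1,\dots,N-1$ together with $\varphi_1+\dots+\varphi_{N-1}\not\equiv 0\pmod{2\pi}$. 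Identifying each $\mathbb{S}^1\setminus\{0\}$ with the interval $(0,2\pi)$ and dropping the connected $\mathbb{S}^1$ factor carrying $\theta_1$, $R_0(C_N)$ equals the number of connected components of
\[
X=\Bigl\{(\varphi_1,\dots,\varphi_{N-1})\in(0,2\pi)^{N-1}:\ \textstyle\sum_{i=1}^{N-1}\varphi_i\notin\{2\pi,4\pi,\dots,2\pi(N-2)\}\Bigr\},
\]
where the listed multiples of $2\pi$ are exactly those lying strictly inside the range $(0,2\pi(N-1))$ of the sum.

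The final step is the elementary geometry of this sliced box. The $N-2$ hyperplanes $\sum_i\varphi_i=2\pi k$, $k=1,\dots,N-2$, are parallel, so they cut the convex set $(0,2\pi)^{N-1}$ into the $N-1$ pieces $P_k=\{2\pi(k-1)<\sum_i\varphi_i<2\pi k\}\cap(0,2\pi)^{N-1}$, $k=1,\dots,N-1$. Each $P_k$ is an intersection of convex sets, hence convex and connected, and each is nonempty (e.g. take all $\varphi_i$ equal to $2\pi(k-\tfrac12)/(N-1)\in(0,2\pi)$, giving sum $2\pi(k-\tfrac12)\in(2\pi(k-1),2\pi k)$). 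Since distinct $P_k$ lie on opposite sides of a removed hyperplane, they are precisely the connected components of $X$, so $R_0(C_N)=N-1$.

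The only real subtlety — and what I would be most careful about — is bookkeeping rather than any deep obstruction: checking that the relative-coordinate maps are genuine torus diffeomorphisms (the determinant $\pm1$ point), that the edge set of the tree is captured exactly by the $N-1$ differences and that of the cycle by the $N-1$ differences plus the one derived constraint, and that the number of ``forbidden'' hyperplanes for $C_N$ is $N-2$ (not $N-1$ or $N$) precisely because the extreme multiples $0$ and $2\pi(N-1)$ are not attained on the open box.
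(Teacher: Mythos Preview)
Your argument is correct, but it takes a genuinely different route from the paper. The paper develops a combinatorial framework: each edge $(i,j)$ gets a label in $\{+,-\}$ recording the sign of $\theta_j-\theta_i$ on $I^N=[0,2\pi]^N$, the $N$ compactification identifications $\theta_i=0\leftrightarrow\theta_i=2\pi$ act on these labels by flipping all edges incident to $i$, and the permitted labellings are argued to be exactly the acyclic orientations of $\mathcal{G}$. Counting equivalence classes under the allowed (source/sink) flip operators then gives $R_0$. For trees the paper observes that one can iteratively flip outermost edges, collapsing all orientations to one class; for $C_N$ it proves a lemma that the orientations with exactly $P$ clockwise edges form a single class, yielding $N-1$ classes after discarding the two cyclic orientations.

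Your approach instead passes to relative-phase coordinates via a unimodular integer change of variables, reducing $\mathbb{T}^N_0$ to $\mathbb{S}^1\times(\mathbb{S}^1\setminus\{0\})^{N-1}$ for trees and, for $C_N$, to an $\mathbb{S}^1$ factor times an open cube $(0,2\pi)^{N-1}$ with $N-2$ parallel affine hyperplanes removed. This is more elementary and entirely self-contained: the convexity of the pieces makes the component count immediate, and you avoid the somewhat informal discussion of which labellings are ``physically realizable'' and which identification operators are ``permitted''. What the paper's framework buys in return is extensibility: the acyclic-orientation and source/sink picture is what the authors reuse for $K_N$ and attempt to carry to general graphs (relating $R_0$ to the chromatic polynomial), whereas your coordinate trick relies on the edge set of a spanning tree giving an invertible change of torus coordinates and does not obviously generalize beyond graphs that are a tree plus a few extra edges.
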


\begin{proof}
Consider first a single edge $(i,j)$ of $\mathcal{G}$, which places constraint ${|\theta_i - \theta_j| \neq 0}$. First we treat each coordinate as lying in $I^1 \equiv [0,2\pi]$ rather than in $\mathbb{S}^1$. The constraint produces two disconnected regions $\{+,-\} \subset I^2$, as shown in Figure \ref{fig:singlecons}, which merge to a single region after compactification to $\mathbb{T}^2$.


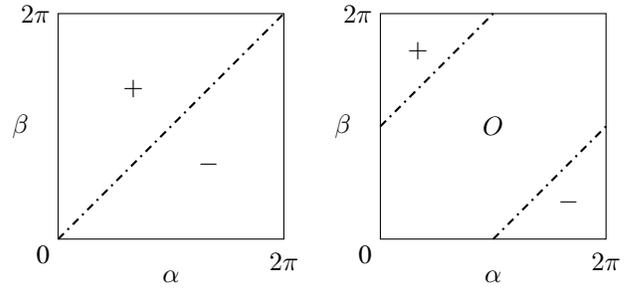
\begin{figure}
  \begin{center}
      \vspace{0cm}
    \subfigure{
        \begin{tikzpicture}
        \draw[postaction={decorate},decoration={
            markings,
            }
          ]
            (0,0) -- +(3,0) -- +(3,3) -- +(0,3) -- cycle;
        \draw [thick,dash dot] (0,0) -- (3,3);
        \node[] at (-0.2,-0.2) {0};
        \node[] at (-0.3,3) {$2\pi$};
        \node[] at (3,-0.3) {$2\pi$};
        
        \node[] at (1,2) {$+$};
        \node[] at (2,1) {$-$};
        
        \node[] at (-0.5,1.5) {$\bf{\beta}$};
        \node[] at (1.5,-0.5) {$\bf{\alpha}$};
        \end{tikzpicture}}
    \subfigure{
        \begin{tikzpicture}
        \draw[postaction={decorate},decoration={
            markings,
            }
          ]
            (0,0) -- +(3,0) -- +(3,3) -- +(0,3) -- cycle;
        \draw [thick,dash dot] (0,1.5) -- (1.5,3);
        \draw [thick,dash dot] (1.5,0) -- (3,1.5);
        \node[] at (-0.2,-0.2) {0};
        \node[] at (-0.3,3) {$2\pi$};
        \node[] at (3,-0.3) {$2\pi$};
        
        \node[] at (0.5,2.5) {$+$};
        \node[] at (1.5,1.5) {$O$};
        \node[] at (2.5,0.5) {$-$};
        
        \node[] at (-0.5,1.5) {$\bf{\beta}$};
        \node[] at (1.5,-0.5) {$\bf{\alpha}$};
        \end{tikzpicture}}
    \end{center}
    \caption{Single edge constraints between two agents, for repulsive (left) and attractive coupling (right). $M$ such constraints together have the effect of carving up $\mathbb{T}^N_0$ and $\mathbb{T}^N_\pi$, respectively, into disconnected regions.}
    \label{fig:singlecons}
    \vspace{0cm}
\end{figure}

The full connectivity graph puts $M$ such constraints on $\mathbb{T}^N$, or $I^N$ prior to compactification. Any point in $I^N$ then has a well-defined label $\{+,-\}^M$, enumerated by some ordering of the graph edges. Let us assume for the moment that all such labellings are permitted, and correspond to physically-realizable configurations of agents. $I^N$ then contains $2^M$ unique disconnected regions. However, for compactification to proceed, each of the $N$ pairs of $(N-1)$-faces corresponding to $\theta_i = 0$ and $\theta_i = 2\pi$ must be identified, which may cause previously disconnected regions to become connected. In our graph labelling, this identification corresponds to reversing the labelling of all edges adjoined to $\theta_i$, as the barriers for each of these edges must all be crossed to travel from $0$ to $2\pi$ or vice-versa, while all other edge barriers run perpendicular to the $(N-1)$-face in question. 

The set of $(N-1)$-face identification operators may be treated as members of an $M$-dimensional vector space over the field with two elements, $\mathbb{F}_2^M$, which span some subspace $H \subseteq \mathbb{F}_2^M$. If all identification operators could be freely applied to all graph labellings, $H$ would contain all edge labellings which are path-connected to the zero-label $O^M$, and $D = \mathbb{F}_2^M / H$ would be isomorphic to the set of equivalence classes of disconnected labellings. From this and the linear dependence identity $\sum_i\hat{A}_i = 0^M$, the simple relation $R_0(\mathcal{G}) = |D| = 2^{M-N+1}$ would hold. Unfortunately, each graph labelling permits only a restricted subset of identification operators on its vertices, corresponding to those agents in the configuration which can be translated across $\theta=0$ without colliding with another edge-adjacent agent (note that two agents which are not in communication may collide without consequence).

To address this, we must first address our unfounded assumption from earlier in this argument, that all connectivity graph labellings $\{+,-\}^M$ were permitted. This was excessively bold, as many labellings describe arrangements of agents which are not self-consistently ordered. This is most easily seen by assigning an orientation to our graph in place of a two-labelling, where the directed arrow of an edge points from $\alpha \rightarrow \beta$ if $\theta_\alpha < \theta_\beta$. The simplest such example is shown in Figure \ref{fig:illegalrep}, where angle $\theta_\gamma$ cannot be placed on the circle such that it satisfies both $\theta_\gamma > \theta_\beta$ and $\theta_\gamma < \theta_\alpha$. Of course, vertex $\gamma$ is not responsible for the problem -- reversing the direction of any edge would make this graph permissible.

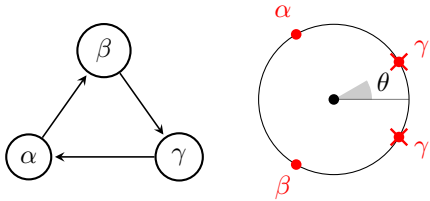
\begin{figure}
  \begin{center}
      \vspace{0cm}
    \subfigure{
    \raisebox{1em}{
    \begin{tikzpicture}[
            > = stealth, 
            shorten > = 1pt, 
            auto,
            node distance = 2cm, 
            semithick 
        ]

        \tikzstyle{every state}=[
            draw = black,
            thick,
            fill = white,
            minimum size = 2mm
        ]

        \node[state] (a) at (0,0) {$\alpha$};
        \node[state] (b) at (1,1.414) {$\beta$};
        \node[state] (c) at (2,0) {$\gamma$};

        \path[->] (a) edge node {} (b);
        \path[->] (b) edge node {} (c);
        \path[->] (c) edge node {} (a);
    \end{tikzpicture}}}
    \hspace{1em}
    \subfigure{
    \begin{tikzpicture}
        \coordinate (O) at (2,2);
        \def\radius{1cm}
        
        \draw (O) circle[radius=\radius];
        \fill (O) circle[radius=2pt] node[below left]{};
        
        \path (O) ++(120:\radius) coordinate (a);
        \path (O) ++(240:\radius) coordinate (b);
        \path (O) ++(330:\radius) coordinate (c1);
        \path (O) ++(30:\radius) coordinate (c2);
        
        \fill[red] (a) circle[radius=2pt] ++(120:1em) node {$\alpha$};
        \fill[red] (b) circle[radius=2pt] ++(240:1em) node {$\beta$};
        \fill[red] (c1) circle[radius=2pt] ++(330:1em) node {$\gamma$};
        \fill[red] (c2) circle[radius=2pt] ++(30:1em) node {$\gamma$};
        \node at (c1)  {\Cross};
        \node at (c2)  {\Cross};
        
        \path (O) ++(0:\radius) coordinate (x);
        \path (O) ++(30:\radius) coordinate (y);
        \begin{scope}
        \path[clip] (O) -- (x) -- (y);
        \draw[thin] (O) -- (x);
        \fill[black, opacity=0.2, draw=black] (O) circle (5mm);
        \node at ($(O)+(20:7mm)$) {$\theta$};
        \end{scope}
    \end{tikzpicture}}
    \vspace{-1em}
    \end{center}
    \caption{A simple illegal graph labelling in the repulsive coupling case: agent $\gamma$ cannot simultaneously satisfy $\theta_\gamma > \theta_\beta$ and $\theta_\gamma < \theta_\alpha$ if $\theta_\beta > \theta_\alpha$.}
    \label{fig:illegalrep}
    \vspace{0cm}
\end{figure}

In order to avoid such clashes in ordering, the orientation of the graph is required to be acyclic. The enumeration of acyclic orientations is solved for all graphs, and is given by $N_A(\mathcal{G}) = |\chi_{\mathcal{G}}(-1)| = 2^M - N_C(\mathcal{G})$, where $\chi_{\mathcal{G}}$ is the chromatic polynomial of $\mathcal{G}$ \cite{Stanley2006}. While $\chi_G$ is known and takes simple forms for many graphs, it must be computed algorithmically for unknown graphs, and this problem is \#P-hard \cite{Linial1986}.

We may now make use of orientations to characterize the set of identification operators permissible for each graph labelling. Any vertex $\alpha$ which is unobstructed from crossing $\theta=0$ must have either $\theta_\alpha < \theta_{\beta_i}$ or $\theta_\alpha > \theta_{\beta_i}$ for all edge-connected vertices $\beta_i$. In an acyclic orientation, such vertices are known as sources and sinks; for every acyclic orientation of any graph, $N_{sources} \geq 1$ and $N_{sinks} \geq 1$, and orientations with one of each are known as bipolar orientations. Therefore, only the set of identification operators at the source and sink vertices of a graph orientation may be applied to that labelling. Equivalence classes of labellings may be constructed by applying the permitted operators to all acyclic orientations of $\mathcal{G}$; the quantity of equivalence classes is then $R_0(\mathcal{G})$.

For tree graphs $T$, the situation is fortunately simple: all orientations must be acyclic, so $N_C(T) = 0$. Furthermore, all orientations lie in the same equivalence class. The outermost vertices of the tree must always be source or sinks, and so their identification operator may be freely applied to flip the direction of the outermost edges. This, in turn, allows all second-to-outermost vertices to be transformed into sources or sinks, and by iteration, the entire tree may be freely reoriented. Thus $R_0(T) = 1$.

\begin{lemma}
    For circle graphs $C_N$, all orientations with $P$ clockwise-directed edges and $N-P$ counterclockwise-directed edges form a unique, closed equivalence class under source-sink identification operators.
\end{lemma}
\begin{proof}
    A vertex lies between a clockwise- and a counterclockwise-directed edge if and only if it is a source or a sink; without loss of generality, consider a source. Acting on this vertex with its identification operator can be thought of as shifting the clockwise-directed edge in the counterclockwise direction along $C_N$, and vice versa. If a neighboring vertex was previously a sink, it will become neutral; otherwise, it will become a source. Therefore, any orientation in the class can be transformed into any other desired orientation by moving all clockwise-directed edges into the desired position, one step at a time. Furthermore, by the binomial theorem, there are ${N \choose P}$ labellings in this equivalence class.
\end{proof}

Enumerating by the number of clockwise edges, and rejecting the $N_C(C_N) = 2$ cyclic orientations, we have $R_0(C_N) = N-1$. The partitioning of all graph labellings is confirmed by noting that $\sum_{P=0}^N {N \choose P} = 2^N$. 

\end{proof}

\begin{proposition}
    For complete graphs $K_N$, $R_0(K_N) = (N-1)!$.
\end{proposition}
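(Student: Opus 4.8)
The plan is to reuse the combinatorial machinery developed for $C_N$ and for trees: enumerate the physically realizable edge labellings of $K_N$ and then count their equivalence classes under the source--sink identification operators, invoking the fact established above that $R_0(\mathcal{G})$ equals this number of classes. For $K_N$ we have $M=\binom{N}{2}$, an orientation of $K_N$ is a tournament, and it is acyclic exactly when it is transitive, i.e. when it encodes a strict total order $\theta_{\sigma(1)}<\theta_{\sigma(2)}<\dots<\theta_{\sigma(N)}$ of the agents along the arc $[0,2\pi)$. Hence the realizable labellings are in bijection with the $N!$ linear orders of $\mathcal{N}$, in agreement with $N_A(K_N)=|\chi_{K_N}(-1)|=N!$ since $\chi_{K_N}(k)=k(k-1)\cdots(k-N+1)$.

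The first substantive step is to pin down how the permitted operators act. In a transitive tournament the source is the minimal vertex and the sink is the maximal vertex, and both are unique (every vertex is adjacent to every other), so exactly two operators are available at each linear order. Reversing every edge at the source of $v_1<v_2<\dots<v_N$ turns $v_1$ from global minimum into global maximum while leaving the order of $v_2,\dots,v_N$ intact, yielding the still transitive tournament $v_2<\dots<v_N<v_1$; the sink operator similarly yields $v_N<v_1<\dots<v_{N-1}$. Thus, uniformly over all linear orders, the source operator realizes the cyclic shift $(v_1,\dots,v_N)\mapsto(v_2,\dots,v_N,v_1)$ and the sink operator its inverse, so the operators generate a free action of the cyclic group $\mathbb{Z}/N$ on the set of $N!$ linear orders.

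It then remains to count orbits. Because the $N$ vertices are distinct, no linear order is fixed by a nontrivial cyclic shift, so every orbit has exactly $N$ elements and the number of equivalence classes --- hence of disconnected regions --- is $N!/N=(N-1)!$. Equivalently, an equivalence class is nothing but a cyclic ordering of the $N$ agents on $\mathbb{S}^1$, of which there are $(N-1)!$ (a cyclic order and its reflection are genuinely distinct components, since two communicating agents cannot pass through one another), and the partition is confirmed by $(N-1)!\cdot N=N!$.

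The step I expect to be the main obstacle is the verification in the middle paragraph: one must check that along any sequence of moves only source and sink operators ever become available and that their composites never escape the class of cyclic shifts, so that the equivalence classes are exactly the $\mathbb{Z}/N$-orbits and not something coarser. This rests on the elementary fact that in $K_N$ every pair of agents communicates, so that the cyclic order of the agents is a locally constant and complete invariant of a configuration in $\mathbb{T}^N_0$; the remaining ingredient, that each cyclic-shift orbit is path-connected in $\mathbb{T}^N_0$ while distinct orbits are separated by collisions, is inherited verbatim from the argument used for $C_N$.
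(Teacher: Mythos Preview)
Your proposal is correct and follows essentially the same approach as the paper: the paper also observes that acyclic orientations of $K_N$ are bipolar with a unique source and sink, that the source operator turns the minimum into the maximum so that every equivalence class has exactly $N$ labellings, and that $N_A(K_N)=N!$ gives $R_0(K_N)=N!/N=(N-1)!$; it also gives the direct ``circular permutations of $N$ items'' reading that you mention at the end. Your write-up is somewhat more explicit in identifying the operators with a free $\mathbb{Z}/N$-action and in flagging why the orbits cannot be coarser, but the underlying argument is the same.
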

\begin{proof}
    As all vertices are adjacent in a complete graph, no agent may collide with any other agent. Therefore $R_0(K_N)$ reduces to the number of distinct circular permutations of $N$ items, which is $(N-1)!$. We may also interpret this case through the above source-sink formalism. Acyclic orientations of complete graphs are always bipolar \cite{deFraysseix1995}, and the source of the orientation corresponds to the agent with smallest $\theta$. Acting on this source vertex with its identification operator causes it to become a sink, and the next smallest-$\theta$ vertex to become the new source; iterating, every equivalence class under the identification operators must contain $N$ labellings. Furthermore, $N_A(K_N) = \chi_{K_N}(-1) = N!$, so $R_0(K_N) = N! / N = (N-1)!$.
\end{proof}

\subsubsection{Attractive Case}

In the case of attractive communication, coupling functions instead must satisfy $|\theta_i - \theta_j| \neq \pi$, restricting $\mathbb{T}^N$ to $\mathbb{T}^N_\pi \equiv \{\theta \in \mathbb{T}^N : |\theta_i - \theta_j| \neq 0, \forall (i,j) \in \mathcal{E} \}$. In this case, a single constraint produces three disconnected regions $\{+,O,-\} \subset I^2$, as shown in Figure \ref{fig:singlecons}. We do not attempt here a complete discussion of the attractive case, but point out some qualitative differences in treatment from the repulsive case. 

A formalism of edge-labellings and vertex compactification operators, as was developed in the repulsive case, may once again be applied. It is suspected but unconfirmed that this labelling may be reduced to an oriented two-coloring of the undirected graph, as $+$ and $-$ labellings may not appear edges adjacent to the same vertex. Even within this scheme, it appears that the $\pm$-colored subgraph must be connected and bipartite in order to correspond to a physical formation, leading to a complex counting problem for general graphs, even before compactification is considered.

\section{Shaping consensus to a target formation} \label{shaping}




Now, we consider the problem of shaping consensus; given a connected graph $\mathcal{G} = (\mathcal{N}, \mathcal{E})$, a set of intrinsic frequencies $\{\omega_i\}_\mathcal{N}$, and a formation $\{\Delta_{ij}\}_\mathcal{E}$, i.e., a phase difference given for each edge, find a common frequency $\bar{\omega}$, a set of attractive undirected edges $\mathcal{E}^a$, a set of repulsive undirected edges $\mathcal{E}^r$, a set of attractive coupling functions $\{f_{ij}(\cdot)\}_{\mathcal{E}^a}$, and a set of repulsive coupling functions $\{g_{ij}(\cdot)\}_{\mathcal{E}^r}$ such that the given formation represents a limit cycle governed by the equation
\begin{align}\label{eq:lim_cyc}
\bar{\omega} = \omega_i + \sum_{j \in \mathcal{N}_i^a} f_{ij}(\Delta_{ij}) + \sum_{j \in \mathcal{N}_i^r} g_{ij}(\Delta_{ij}), \quad i \in \mathcal{N},
\end{align}
to which the system described by
\begin{align}
\begin{split}
\dot{\theta}_i(t) &=\omega_i +  \sum_{j \in \mathcal{N}_i^a} f_{ij}\left(\theta_j(t) - \theta_i(t)\right)\\
&\quad + \sum_{j \in \mathcal{N}_i^r} g_{ij}\left(\theta_j(t) - \theta_i(t)\right), \quad i \in \mathcal{N}
\end{split}
\end{align}
converges. Here, $\mathcal{N} := \{1, \dots, N\}$, $\mathcal{N}_i := \{j \in \mathcal{N} : (i, j) \in \mathcal{E}\}$, $\mathcal{N}_i^a$ and $\mathcal{N}_i^r$ denote the subsets of nodes connected to agent $i$ by attractive and repulsive couplings, respectively, and $f_*(\cdot) : \mathbb{R} \to \mathbb{R}$, $g_*(\cdot) : \mathbb{R} \to \mathbb{R}$ are $2\pi$-periodic functions that are twice differentiable on $(-\pi, \pi)$, $(0, 2\pi)$ respectively such that $f_*(0) = 0$, $g_*(\pi) = 0$, $f_*'(\alpha) > 0$ for all $\alpha \in (-\pi, \pi)$, and $g_*'(\alpha) > 0$ for all $\alpha \in (0, 2\pi)$.
In particular, 
\begin{align*}
\lim_{t \to (2n+1)\pi^+} f_*(t) &= -\infty \quad \text{and} \quad \lim_{t \to (2n+1)\pi^-} f_*(t) = \infty \\
\lim_{t \to 2n\pi^+} g_*(t) &= -\infty \quad \text{and} \,\,\,\,\quad\quad \lim_{t \to 2n\pi^-} g_*(t) = \infty
\end{align*}
for all $n \in\mathbb{N}$.

In this section, we first present a necessary and sufficient condition for this problem to be solvable.
Then, we illustrate how to choose a subgraph that minimizes the number of connections.
Finally, by restricting the coupling functions to take a prototypical shape, we derive a minimum energy solution.
An additional remark on using only the attractive coupling is also provided.

Now, first note that a necessary condition for this problem to be solvable is simply that there exists no edge $(i, j) \in \mathcal{E}$ such that $\Delta_{ij} = 0,\pi \mod 2\pi$.
This comes from our freedom to choose between attractive and repulsive coupling for each edge.
Our claim is that this is also sufficient.

\begin{theorem}\label{thm:NS}
Given a connected graph $\mathcal{G} = (\mathcal{N}, \mathcal{E})$, a set of intrinsic frequencies $\{\omega_i\}_\mathcal{N}$, and a formation $\{\Delta_{ij}\}_\mathcal{E}$, assume that $\Delta_{ij} \neq 0, \pi \mod 2\pi$ for all $(i, j) \in \mathcal{E}$.
Then, there exists a common frequency $\bar{\omega}$, a set of attractive undirected edges $\mathcal{E}^a$, a set of repulsive undirected edges $\mathcal{E}^r = \mathcal{E} \setminus \mathcal{E}^a$, a set of attractive coupling functions $\{f_{ij}(\cdot)\}_{\mathcal{E}^a}$, and a set of repulsive coupling functions $\{g_{ij}(\cdot)\}_{\mathcal{E}^r}$ such that~\eqref{eq:lim_cyc} is satisfied. 
\end{theorem}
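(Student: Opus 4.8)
The plan is to reduce~\eqref{eq:lim_cyc} to a combinatorial balancing problem on $\mathcal{G}$ and then to realise the resulting numbers by explicit monotone barrier functions. Observe first that~\eqref{eq:lim_cyc} decouples over the vertices: writing $v_{ij}$ for the scalar contributed to agent $i$ by edge $\{i,j\}$, i.e., $v_{ij}=f_{ij}(\Delta_{ij})$ if $\{i,j\}\in\mathcal{E}^{a}$ and $v_{ij}=g_{ij}(\Delta_{ij})$ if $\{i,j\}\in\mathcal{E}^{r}$, equation~\eqref{eq:lim_cyc} is exactly the system of node balances $\bar\omega-\omega_i=\sum_{j\in\mathcal{N}_i}v_{ij}$, $i\in\mathcal{N}$. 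Since the coupling functions attached to the two orientations of an edge are chosen independently, the only link between $v_{ij}$ and $v_{ji}$ is a sign constraint coming from monotonicity together with the position of $\Delta_{ij}$ relative to the zero of the relevant coupling.

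This is where the hypothesis is spent. Let $\delta$ be the representative of $\Delta_{ij}$ in $(-\pi,\pi]$; the assumption $\Delta_{ij}\neq 0,\pi\bmod 2\pi$ says $\delta\notin\{0,\pi\}$, so $\delta$ sits strictly on one side of the attractive zero $0$ and strictly on one side of the repulsive zero $\pi$. An attractive coupling is strictly increasing through $0$, so declaring $\{i,j\}$ attractive fixes the signs of $v_{ij}$ and $v_{ji}$ while leaving their magnitudes arbitrary; a repulsive coupling is strictly increasing through $\pi$, so declaring $\{i,j\}$ repulsive flips those signs. Thus each edge supplies two free positive magnitudes together with a binary sign choice, whereas if $\delta$ were $0$ or $\pi$ one of the two values would be pinned to $0$ --- which is precisely why the condition is also necessary.

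It remains to choose $\bar\omega$, the partition $\mathcal{E}=\mathcal{E}^{a}\cup\mathcal{E}^{r}$, and the magnitudes so that all $N$ balances hold at once. The case $N=1$ is immediate ($\bar\omega=\omega_1$), so suppose $N\ge2$, in which case every vertex is incident to at least one edge. The natural strategy is to push $\bar\omega$ outside the range of the data --- say $\bar\omega>\max_i\omega_i$, so every defect $\bar\omega-\omega_i$ has the same sign --- and then to use the per-edge sign choice to make all the $v_{ij}$ compatible with those defects; each node balance then reads as a positive quantity distributed over the freely scalable contributions of its incident edges, which is solvable. The \emph{main obstacle} is that the two ends of an edge are sign-linked, so these choices cannot be made vertex by vertex: they must be reconciled globally, which is where connectivity of $\mathcal{G}$ is used --- concretely, by propagating the assignment along a spanning tree and tuning $\bar\omega$ so that the value forced on each tree edge stays nonzero and of an admissible sign. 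Since $\mathcal{E}^{a}$ and $\mathcal{E}^{r}$ partition the undirected edges, the resulting construction automatically uses couplings of the same type in both directions of every connection, as required in Remark~\ref{remark: repel} for forward invariance.

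Finally, with the target values $v_{ij}$ fixed, one writes down for each directed edge an explicit $2\pi$-periodic, twice-differentiable function that is strictly increasing on the appropriate interval, vanishes at the required point, has the prescribed $\pm\infty$ barriers, and passes through $(\Delta_{ij},v_{ij})$ --- for instance a rescaled tangent profile $c\tan(\cdot/2)$ for attractive edges and $-c\cot(\cdot/2)$ for repulsive ones, with $c>0$ chosen to hit $(\Delta_{ij},v_{ij})$. Then~\eqref{eq:lim_cyc} holds by construction, and together with the necessity noted above this shows that $\Delta_{ij}\neq 0,\pi\bmod 2\pi$ is necessary and sufficient; convergence of the dynamics to the resulting limit cycle then follows from Theorem~\ref{consensus theorem} and Remark~\ref{remark: repel}. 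Only the global sign reconciliation of the previous step is nonroutine; the construction of the individual coupling functions is standard.
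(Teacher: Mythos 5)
Your reduction to node balances $\bar\omega-\omega_i=\sum_{j\in\mathcal{N}_i}v_{ij}$ and your accounting of where the hypothesis $\Delta_{ij}\neq 0,\pi$ is spent are both correct, and the final realization of prescribed values by rescaled tangent/cotangent profiles is routine, as you say. The genuine gap is in the step you yourself flag as the crux: the choice of $\bar\omega$ and the global sign reconciliation. Your proposed choice $\bar\omega>\max_i\omega_i$ is not merely harder to reconcile --- it is infeasible for a whole class of graphs covered by the theorem. As you correctly observe, for either coupling type the two ends of an edge receive contributions of \emph{opposite} sign ($f_{ij}(\Delta_{ij})$ and $f_{ji}(-\Delta_{ij})$ straddle the common zero, and likewise for $g$), and the attractive/repulsive switch flips both signs together. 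Hence each edge can supply a correct-sign (here: positive) contribution to at most one of its endpoints. If every defect $\bar\omega-\omega_i$ is positive, every vertex needs at least one positively contributing incident edge, so you need an orientation of $\mathcal{E}$ with in-degree at least one everywhere; for a tree (or any graph with a leaf, e.g.\ a star) this is impossible since $|\mathcal{E}|=N-1<N$. Concretely, for two agents with $\omega_1=\omega_2$ no choice works at all, and for a star with $\bar\omega>\max_i\omega_i$ the leaves force all edges to contribute negatively to the hub, contradicting $\bar\omega-\omega_{\mathrm{hub}}>0$. No spanning-tree propagation can repair this; the obstruction is a counting one.

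The paper's proof resolves exactly this point by making the opposite choice: $\bar\omega$ is taken strictly \emph{inside} $(\min_i\omega_i,\max_i\omega_i)$ with $\bar\omega\neq\omega_i$, splitting $\mathcal{N}$ into $\mathcal{N}_{\text{small}}$ and $\mathcal{N}_{\text{large}}$ whose defects have opposite signs. A cut edge between the two sides then serves \emph{both} of its endpoints simultaneously (its positive contribution goes to the small-frequency end and its negative one to the large-frequency end, which is exactly what each needs), the type (attractive vs.\ repulsive) being dictated by which half-circle $\Delta_{ij}$ lies in. Connectivity guarantees every component of each side meets such a cut edge, and the remaining vertices are handled by a layer-by-layer propagation inward, each new vertex taking its dominant edge from an already-anchored neighbour; all non-dominant contributions are then made $\epsilon$-small so that the single dominant coupling can be tuned to close the balance exactly. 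You should also note that this argument silently requires the $\omega_i$ not all to be equal (otherwise the interval is empty), which your two-agent example above shows is a genuine restriction rather than an artifact. As written, your proof cannot be completed along the route you commit to; replacing the ``push $\bar\omega$ outside the range'' step with the interior choice and the cut-edge anchoring is not a cosmetic fix but the main idea of the proof.
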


\begin{proof}
Note first that if we can find a common frequency $\bar{\omega}$ and a set $\mathcal{E}^a$ such that for each $i \in \mathcal{N}$, there exists $j \in \mathcal{N}_i^a := \{j \in \mathcal{N}_i : (i, j) \in \mathcal{E}^a\}$ satisfying
$$\Delta_{ij} \in \begin{cases} (0, \pi) \,\,\,\,\,\mod 2\pi, &\mbox{ if } \bar{\omega} > \omega_i, \\ (-\pi, 0) \mod 2\pi, &\mbox{ if } \bar{\omega} < \omega_i,\end{cases}$$
or $j \in \mathcal{N}_i^r := \{j \in \mathcal{N}_i : (i, j) \in \mathcal{E}^r:=\mathcal{E}\setminus\mathcal{E}^a\}$ satisfying
$$\Delta_{ij} \in \begin{cases} (0, \pi) \,\,\,\,\mod 2\pi, &\mbox{ if } \bar{\omega} < \omega_i, \\ (\pi, 2\pi) \mod 2\pi, &\mbox{ if } \bar{\omega} > \omega_i,\end{cases}$$
then we can find a set of attractive coupling functions $\{f_{ij}(\cdot)\}_{\mathcal{E}^a}$ and a set of repulsive coupling functions $\{g_{ij}(\cdot)\}_{\mathcal{E}^r}$ such that~\eqref{eq:lim_cyc} is satisfied.
To see this note that if, for instance, such $j \in \mathcal{N}_i^a$ exists, then we can always find $f_{ik}(\cdot)$ for $k \in \mathcal{N}_i^a$, $k \neq j$ and $g_{ik}(\cdot)$ for $k \in \mathcal{N}_i^r$ such that $f_{ik}(\Delta_{ik})$ and $g_{ik}(\Delta_{ik})$ are sufficiently small so that
$$\left|\bar{\omega} - \omega_i\right| > \sum_{k \in \mathcal{N}_i^a, k \neq j} \left|f_{ik}(\Delta_{ik})\right| + \sum_{k \in \mathcal{N}_i^r}\left| g_{ik}(\Delta_{ik})\right|$$
and therefore, we can find $f_{ij}(\cdot)$ such that 
$$f_{ij}(\Delta_{ij}) = \bar{\omega} - \omega_i - \sum_{k \in \mathcal{N}_i^a, k\neq j} f_{ik}(\Delta_{ik}) - \sum_{k \in \mathcal{N}_i^r} g_{ik}(\Delta_{ik}).$$

Now, we can show that for any common frequency $\bar{\omega} \in (\min_i\omega_i, \max_i \omega_i)$ such that $\bar{\omega} \neq \omega_i$, $i \in \mathcal{N}$, there exists a set $\mathcal{E}^a$ that satisfies the above condition.
In particular, pick any such common frequency $\bar{\omega}$ and let $\mathcal{N}_\text{large} := \{i \in \mathcal{N}: \omega_i > \bar{\omega}\}$ and $\mathcal{N}_\text{small} := \{i \in \mathcal{N} : \omega_i < \bar{\omega}\}$.
Now, for each subgraph that corresponds to $\mathcal{N}_\text{large}$ and $\mathcal{N}_\text{small}$, there will be multiple connected components.
But, these should be connected to at least one of the connected components on the other side.
So, for all of these edges, say $(i, j) \in \mathcal{E}$, $i \in \mathcal{N}_\text{small}$, $j \in \mathcal{N}_\text{large}$, if $\Delta_{ij}\in (0, \pi) \mod 2\pi$, then let $(i, j) \in \mathcal{E}^a$.
Otherwise, if $\Delta_{ij} \in (\pi, 2\pi) \mod 2\pi$, then let $(i, j) \in \mathcal{E}^r$.
By this construction, those agents in $\mathcal{N}_\text{small}$, that have neighbors in $\mathcal{N}_\text{large}$, satisfy the above condition, and vice versa for those agents in $\mathcal{N}_\text{large}$.

For the remaining edges that are associated with each connected component, if this corresponds to $\mathcal{N}_\text{small}$, then starting from the agents that have a connection with the other side, define as above the attractiveness and the repulsiveness by considering the starting agents as $\mathcal{N}_\text{large}$ and their neighbors as $\mathcal{N}_\text{small}$, and repeat.
Analogously, for connected components corresponding to $\mathcal{N}_\text{large}$, start from the agents that have a connection with the other side and define as above the attractiveness and the repulsiveness by considering the starting agents as $\mathcal{N}_\text{small}$ and their neighbors as $\mathcal{N}_\text{large}$, and repeat.
In this way, all of the agents satisfy the above condition.
\end{proof}

Now, having shown that we have a mild necessary and sufficient condition, a direct conclusion can be made about the choice of a subgraph that minimizes the number of connections.

\begin{corollary}
Given a connected graph $\mathcal{G} = (\mathcal{N}, \mathcal{E})$, any undirected spanning tree of the original graph $\mathcal{G}$ has the minimum number of edges among all the subgraphs that still allow the problem to be solvable.
\end{corollary}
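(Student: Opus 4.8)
The plan is to combine Theorem~\ref{thm:NS} with the elementary fact that a connected graph on $N$ vertices has at least $N-1$ edges, with equality exactly for trees. I would argue in two directions: (i) every undirected spanning tree of $\mathcal{G}$ makes the problem solvable, and (ii) no subgraph of $\mathcal{G}$ with fewer than $N-1$ edges makes it solvable. Together these show that $N-1$ is the minimal edge count compatible with solvability and that the spanning trees of $\mathcal{G}$ are precisely the minimizers.

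For direction (i): let $\mathcal{T} = (\mathcal{N}, \mathcal{E}_\mathcal{T})$ be any spanning tree, so $\mathcal{T}$ is connected and $|\mathcal{E}_\mathcal{T}| = N-1$. Since $\mathcal{E}_\mathcal{T} \subseteq \mathcal{E}$, the standing hypothesis of Theorem~\ref{thm:NS}, namely $\Delta_{ij} \neq 0, \pi \bmod 2\pi$, holds for every edge of $\mathcal{T}$. Applying Theorem~\ref{thm:NS} to the connected graph $\mathcal{T}$ with the same intrinsic frequencies $\{\omega_i\}_\mathcal{N}$ and the restricted formation $\{\Delta_{ij}\}_{\mathcal{E}_\mathcal{T}}$ yields a common frequency $\bar\omega$, an attractive/repulsive partition of $\mathcal{E}_\mathcal{T}$, and coupling functions for which \eqref{eq:lim_cyc} is satisfied; the accompanying convergence then follows as in Theorem~\ref{consensus theorem} and Remark~\ref{remark: repel}. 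Hence the problem is solvable over $\mathcal{T}$ using only $N-1$ edges.

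For direction (ii): suppose a subgraph $\mathcal{G}' = (\mathcal{N}, \mathcal{E}')$ makes the problem solvable; I claim $\mathcal{G}'$ must be connected, whence $|\mathcal{E}'| \geq N-1$. The convergence requirement is that the dynamics on $\mathcal{G}'$ converge to the target limit cycle, i.e.\ to a phase-locked configuration in which all $N$ agents rotate at the single common frequency $\bar\omega$ with the prescribed relative phases. If $\mathcal{G}'$ had two or more connected components, then the phase difference between agents lying in distinct components would evolve as the difference of the independent collective motions of those components, with no coupling term linking them; that direction is therefore at best neutrally stable and cannot be asymptotically attracted to a prescribed value, so the target formation is not a limit cycle to which the dynamics converge. (In the degenerate sub-case of an isolated vertex $i$, \eqref{eq:lim_cyc} already forces $\bar\omega = \omega_i$, and two isolated vertices with distinct intrinsic frequencies make the problem outright infeasible.) Thus solvability forces connectivity of $\mathcal{G}'$, so $|\mathcal{E}'| \geq N-1$, with equality only when $\mathcal{G}'$ is a spanning tree.

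I expect the main obstacle to be making step (ii) fully rigorous: one must pin down the precise meaning of ``the system converges to the formation'' and show that a disconnected subgraph genuinely fails this, rather than merely observing that the algebraic relation \eqref{eq:lim_cyc} can still be balanced component by component. The cleanest resolution is to emphasize that the target specifies a single common frequency $\bar\omega$ for all $N$ agents together with attraction to fixed relative phases, and that neither frequency synchronization across components nor stabilization of inter-component phase offsets can be achieved without a connecting path; with connectivity thus established as necessary, the corollary is immediate from the $N-1$ edge bound and direction (i).
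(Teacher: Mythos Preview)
Your proposal is correct and matches the paper's intended reasoning; the paper states the corollary as a ``direct conclusion'' from Theorem~\ref{thm:NS} without further proof, and your two-direction argument is exactly that conclusion spelled out. Your concern about step~(ii) is already resolved by the paper's framing: the shaping problem is posed only for connected graphs (the convergence guarantee via Theorem~\ref{consensus theorem} and the differential-positivity analysis requires a connected communication graph), so connectivity of the subgraph is part of the problem statement rather than something to be derived separately.
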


On the other hand, under this necessary and sufficient condition, given a common frequency $\bar{\omega}$, a set of attractive undirected edges $\mathcal{E}^a$, and a set of repulsive undirected edges $\mathcal{E}^r = \mathcal{E} \setminus\mathcal{E}^a$, if we restrict our coupling functions to be the scaled version of a single function, i.e., $f_{ij}(\cdot) = \alpha_{ij}f^*(\cdot)$ and $g_{ij}(\cdot) = \beta_{ij}g^*(\cdot)$, then the problem of finding an appropriate set of coupling functions becomes algebraic, as $f^*(\Delta_{ij})$ and $g^*(\Delta_{ij})$ are constants.
In particular, we only have to find real $\{\alpha_{ij} > 0\}_{\mathcal{N}_i^a}$, $\{\beta_{ij} > 0\}_{\mathcal{N}_i^r}$ such that
\begin{align}\label{eq:alg_lim_cyc}
\bar{\omega} - \omega_i = \sum_{j \in \mathcal{N}_i^a} \alpha_{ij}f_{ij}^* + \sum_{j \in \mathcal{N}_i^r} \beta_{ij} g_{ij}^*
\end{align}
for each $i \in \mathcal{N}$, where $f_{ij}^* := f^*(\Delta_{ij})$ and $g_{ij}^* := g^*(\Delta_{ij})$.

In this respect, a minimum energy solution, in the sense of minimizing $\sum_{i \in \mathcal{N}}\sum_{j \in \mathcal{N}_i^a}\alpha_{ij}^2 + \sum_{j \in \mathcal{N}_i^r} \beta_{ij}^2$, can be found as follows.

\begin{theorem}\label{thm:min_energ}
Under the necessary and sufficient condition, let us assume that a common frequency $\bar{\omega}$, a set of attractive undirected edges $\mathcal{E}^a$, and a set of repulsive undirected edges $\mathcal{E}^r = \mathcal{E}\setminus \mathcal{E}^a$ are given such that for each $i \in \mathcal{N}$ the index set $\mathcal{N}_i^+ := \mathcal{N}_i^{a,+} \cup \mathcal{N}_i^{r,+}$ is non-empty, where 
\begin{align*}
\mathcal{N}_i^{a,+} &:= \{j \in \mathcal{N}_i^a : \text{sgn}(\bar{\omega} - \omega_i) = \text{sgn}(f_{ij}^*) \},\\
\mathcal{N}_i^{a,-} &:= \{j \in \mathcal{N}_i^a : \text{sgn}(\bar{\omega} - \omega_i) = -\text{sgn}(f_{ij}^*) \},\\
\mathcal{N}_i^{r,+} &:= \{j \in \mathcal{N}_i^r : \text{sgn}(\bar{\omega} - \omega_i) = \text{sgn}(g_{ij}^*) \},\\
\mathcal{N}_i^{r,-} &:= \{j \in \mathcal{N}_i^r : \text{sgn}(\bar{\omega} - \omega_i) = -\text{sgn}(g_{ij}^*) \}.
\end{align*}
Then, we have a set $\{\alpha_{ij}>0\}_{\mathcal{N}_i^a}$ and $\{\beta_{ij} > 0\}_{\mathcal{N}_i^r}$ such that~\eqref{eq:alg_lim_cyc} is satisfied.
Among which, an almost minimum energy solution with arbitrary precision can be found as
\begin{align*}
\alpha_{ij} &= \begin{cases} \epsilon, &\mbox{ if } j \in \mathcal{N}_i^{a,-}, \\  \frac{\left|\bar{\omega} - \omega_i\right|\left|f_{ij}^*\right|}{\sum_{j \in \mathcal{N}_i^{a,+}} \left(f_{ij}^*\right)^2 + \sum_{j \in \mathcal{N}_i^{r,+}}\left(g_{ij}^*\right)^2}, &\mbox{ if } j\in \mathcal{N}_i^{a,+},\end{cases}\\
\beta_{ij} &= \begin{cases} \epsilon, &\mbox{ if } j \in \mathcal{N}_i^{r,-}, \\  \frac{\left|\bar{\omega} - \omega_i\right|\left|g_{ij}^*\right|}{\sum_{j \in \mathcal{N}_i^{a,+}} \left(f_{ij}^*\right)^2 + \sum_{j \in \mathcal{N}_i^{r,+}}\left(g_{ij}^*\right)^2}, &\mbox{ if } j\in \mathcal{N}_i^{r,+},\end{cases}
\end{align*}
with arbitrarily small $\epsilon > 0$. 
\end{theorem}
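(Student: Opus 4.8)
The plan is to exploit two structural simplifications. Because $f_{ij}$ and $f_{ji}$ (resp.\ $g_{ij}$ and $g_{ji}$) are unrelated, the coefficients $\alpha_{ij},\beta_{ij}$ appearing in row $i$ of \eqref{eq:alg_lim_cyc} never appear in any other row, so \eqref{eq:alg_lim_cyc} \emph{decouples into $N$ independent scalar linear equations}, and the cost $\sum_i\big(\sum_{j\in\mathcal{N}_i^a}\alpha_{ij}^2+\sum_{j\in\mathcal{N}_i^r}\beta_{ij}^2\big)$ splits as a sum of per-row costs. It therefore suffices to treat each node $i$ separately: minimise the node-$i$ cost over strictly positive coefficients subject to the single scalar constraint \eqref{eq:alg_lim_cyc} at $i$. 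I would first observe that the hypothesis $\mathcal{N}_i^+\neq\emptyset$ forces $\bar\omega\neq\omega_i$: if $\bar\omega=\omega_i$ then $\operatorname{sgn}(\bar\omega-\omega_i)=0$ while $f_{ij}^*,g_{ij}^*\neq 0$ (because $\Delta_{ij}\neq 0,\pi\bmod 2\pi$, the zeros of the strictly monotone $f^*,g^*$ being at $0$ and $\pi$ respectively), so $\mathcal{N}_i^{a,+}$ and $\mathcal{N}_i^{r,+}$ would both be empty. Hence $c_i:=\bar\omega-\omega_i\neq 0$.

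Next I would set up the sign bookkeeping. Collect the node-$i$ unknowns into a vector $x>0$ and the corresponding nonzero numbers $f_{ij}^*,g_{ij}^*$ into a vector $v$; then \eqref{eq:alg_lim_cyc} at $i$ reads $\langle v,x\rangle=c_i$. By definition of the index sets, every entry of $v$ indexed by $\mathcal{N}_i^+=\mathcal{N}_i^{a,+}\cup\mathcal{N}_i^{r,+}$ has sign $\operatorname{sgn}(c_i)$ and every entry indexed by $\mathcal{N}_i^{a,-}\cup\mathcal{N}_i^{r,-}$ has the opposite sign; write $v=(v^+,v^-)$ and $x=(x^+,x^-)$ accordingly. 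Existence of a strictly positive solution is then immediate: pin the ``$-$'' coordinates at a chosen $\epsilon>0$, so the residual equation is the single scalar relation $\langle v^+,x^+\rangle=b_i(\epsilon)$ with $b_i(\epsilon):=|c_i|+\epsilon\big(\sum_{j\in\mathcal{N}_i^{a,-}}|f_{ij}^*|+\sum_{j\in\mathcal{N}_i^{r,-}}|g_{ij}^*|\big)>0$, and take the minimum-Euclidean-norm solution $x^+=\dfrac{b_i(\epsilon)}{\|v^+\|^2}\,v^+$. Since $v^+\neq 0$ (this is exactly $\mathcal{N}_i^+\neq\emptyset$) and all its entries carry the sign of $c_i$, this $x^+$ is strictly positive; writing out the components reproduces the displayed formulas with $|c_i|$ replaced by $b_i(\epsilon)$, and the displayed formulas themselves are the $\epsilon\downarrow 0$ limit.

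It remains to establish the ``almost minimum energy'' optimality. For the lower bound, fix $i$ and any strictly positive feasible $x$; taking $c_i>0$ without loss of generality, the ``$-$'' entries of $v$ are negative, so $\langle v^-,x^-\rangle\le 0$ and hence $\langle v^+,x^+\rangle=c_i-\langle v^-,x^-\rangle\ge c_i$. Cauchy--Schwarz then gives $\|x^+\|^2\ge\langle v^+,x^+\rangle^2/\|v^+\|^2\ge c_i^2/\|v^+\|^2$, so the node-$i$ cost $\|x\|^2\ge\|x^+\|^2$ is at least $c_i^2/\big(\sum_{\mathcal{N}_i^{a,+}}(f_{ij}^*)^2+\sum_{\mathcal{N}_i^{r,+}}(g_{ij}^*)^2\big)$; summing over $i$ bounds the total cost below by $\sum_i c_i^2/\big(\sum_{\mathcal{N}_i^{a,+}}(f_{ij}^*)^2+\sum_{\mathcal{N}_i^{r,+}}(g_{ij}^*)^2\big)$. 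For the matching upper bound, the cost of the $\epsilon$-family constructed above converges to that same quantity as $\epsilon\downarrow 0$, so it is the infimum of the cost over the open feasible set. The one genuine obstacle --- and the reason the statement says ``almost'' with an arbitrarily small $\epsilon$ --- is that this infimum is \emph{not attained}: the true minimiser sets $\alpha_{ij}=0$ on $\mathcal{N}_i^{a,-}$ and $\beta_{ij}=0$ on $\mathcal{N}_i^{r,-}$, which violates the strict positivity demanded of valid monotone couplings, so one must retreat to $\epsilon>0$, paying an $O(\epsilon)$ cost surplus (and, if one uses the displayed closed form rather than its $b_i(\epsilon)$-corrected version, an $O(\epsilon)$ residual in \eqref{eq:alg_lim_cyc}). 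The remaining steps --- the elementary verification of the closed forms and the continuity estimate in $\epsilon$ --- are routine.
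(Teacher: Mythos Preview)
Your proposal is correct and follows essentially the same route as the paper: decouple \eqref{eq:alg_lim_cyc} per node, then use the sign split $\mathcal{N}_i^+/\mathcal{N}_i^-$ together with Cauchy--Schwarz to obtain the lower bound $\|x\|^2\ge c_i^2/\|v^+\|^2$, with the displayed formulas realizing (the limit of) the equality case. Your version is in fact more carefully argued than the paper's terse proof --- you make the per-node decoupling explicit, justify $c_i\neq 0$, introduce the $b_i(\epsilon)$ correction so that the $\epsilon$-family is \emph{exactly} feasible, and spell out why the infimum is not attained --- all points the paper leaves implicit or slightly blurred.
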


\begin{proof}
Note that the increase of $\alpha_{ij}$ and $\beta_{ij}$ for $j \in \mathcal{N}_i \setminus \mathcal{N}_i^+$ necessarily increases the energy, as it acts in opposition to the goal of satisfying~\eqref{eq:alg_lim_cyc}.
Now, we have the following simple inequality that proves our claim.
\begin{align*}
\left|\bar{\omega} - \omega_i\right|^2 &=\left( \sum_{j \in \mathcal{N}_i^a} \alpha_{ij}f_{ij}^* + \sum_{j \in \mathcal{N}_i^r} \beta_{ij} g_{ij}^*\right)^2 \\
&\le \left(\sum_{j \in \mathcal{N}_i^{a,+}} \alpha_{ij}f_{ij}^* + \sum_{j \in \mathcal{N}_i^{r,+}} \beta_{ij}g_{ij}^*\right)^2 \\
&\le  \left(\sum_{j \in \mathcal{N}_i^{a,+}} \alpha_{ij}^2 +\sum_{j \in \mathcal{N}_i^{r,+}}\beta_{ij}^2\right) \times \\
&\quad\quad\quad\quad\quad\quad \left(\sum_{j \in \mathcal{N}_i^{a,+}} \left(f_{ij}^*\right)^2 + \sum_{j \in \mathcal{N}_i^{r,+}} \left(g_{ij}^*\right)^2\right)
\end{align*}
where the first inequality follows for sufficiently small $\epsilon > 0$, and the second inequality is simply the consequence of the Cauchy-Schwarz inequality, where only the proposed solution satisfies the equality. 
\end{proof}

\subsection{Using only attractive coupling}

If we are restricted to use only attractive couplings, then some minor changes apply to our solution.
First of all, our necessary condition becomes stronger.
In particular, a necessary condition for this problem to be solvable is that 1) there exists no edge $(i, j) \in \mathcal{E}$ such that $\Delta_{ij} = \pi \mod 2\pi$ and that 2) there exists $\bar{\omega}$ such that for each $i \in \mathcal{N}$ the index set $\mathcal{N}_i^+$ is non-empty, where $\mathcal{N}_i^+$ is defined as 
$$\left\{ j \in \mathcal{N}_i : \Delta_{ij} \in \begin{cases} (0, \pi) \,\,\,\,\,\mod 2\pi, &\mbox{ if } \bar{\omega} > \omega_i, \\ (-\pi, 0) \mod 2\pi, &\mbox{ if } \bar{\omega} < \omega_i.\end{cases}\right\}.$$
This is also sufficient according to the proof of Theorem~\ref{thm:NS}.

On the other hand, the choice of a subgraph that minimizes connections becomes more complicated as follows.
\begin{theorem}
Given a connected graph $\mathcal{G} = (\mathcal{N}, \mathcal{E})$, a set of intrinsic frequencies $\{\omega_i\}_\mathcal{N}$, and a formation $\{\Delta_{ij}\}_\mathcal{E}$, assume that there exists a common frequency $\bar{\omega}$ such that for each $i \in \mathcal{N}$ the index set $\mathcal{N}_i^+$ is non-empty.
Then, the minimum number of edges of a subgraph $\mathcal{G}'$ that still allows the problem to be solvable is
$$N + C - 1 + \sum_{k=1}^C (n_k - 1)$$
where $C$ is the number of connected components of a graph $\bar{\mathcal{G}} = (\mathcal{N}, \bar{\mathcal{E}})$, $\bar{\mathcal{E}} := \{(i, j) \in \mathcal{E} : j \in \mathcal{N}_i^+ \text{ or } i \in \mathcal{N}_j^+\}$ and $n_k$, $k = 1, \dots, C$ is the number of independent strongly connected components (iSCCs) of a directed graph $\mathcal{G}^k = (\mathcal{N}^k, \mathcal{E}^k)$, where $\mathcal{N}^k$ corresponds to the indices of each connected component of $\bar{\mathcal{G}}$ and $\mathcal{E}^k := \{(i, j) \in \mathcal{E} : j \in \mathcal{N}_i^+, i \in \mathcal{N}^k\}$. 
\end{theorem}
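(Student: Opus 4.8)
The plan is to reduce solvability to a purely combinatorial condition and then prove matching lower and upper bounds; throughout, $\bar\omega$ is the frequency fixed in the hypothesis and $\bar{\mathcal G},\mathcal G^k,n_k$ are formed from it. First I would record that, by the construction in the proof of Theorem~\ref{thm:NS} specialised to $\mathcal E^r=\emptyset$, a subgraph $\mathcal G'$ still allows the problem to be solved if and only if it is a connected spanning subgraph of $\mathcal G$ in which every agent $i$ has a neighbour $j$ with $j\in\mathcal N_i^+$; I call this the \emph{out-coverage} property. Unwinding the definitions of $\mathcal N_i^+$ and $\mathcal N_j^+$ gives the structural facts I will use: an edge $(i,j)\in\bar{\mathcal E}$ carries both arcs $i\to j$ and $j\to i$ of $\mathcal G^k$ when $\omega_i$ and $\omega_j$ lie on opposite sides of $\bar\omega$, and a single arc otherwise; the hypothesis $\mathcal N_i^+\neq\emptyset$ for all $i$ forces every vertex of every $\mathcal G^k$ to have out-degree at least one; and consequently the iSCCs of $\mathcal G^k$ are exactly its sink strongly connected components all of whose vertices lie on one side of $\bar\omega$, which are therefore free of $2$-cycles and have at least three vertices, whereas any sink component meeting both sides contains a $2$-cycle. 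Finally I would note $N+C-1+\sum_k(n_k-1)=N-1+\sum_k n_k$, so this is the quantity to match.

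For the lower bound, connectedness of $\mathcal G'$ gives $|\mathcal E'|=N-1+\beta_1(\mathcal G')$, where $\beta_1$ is the cyclomatic number. Fix an iSCC $S$. Out-coverage assigns to each $v\in S$ an edge of $\mathcal G'$ from $v$ to some $\sigma(v)\in\mathcal N_v^+$; since $S$ is a sink component the arc $v\to\sigma(v)$ stays in $S$, and since $S$ is $2$-cycle-free the map $v\mapsto\{v,\sigma(v)\}$ is injective, so $\mathcal G'[S]$ has at least $|S|$ edges on $|S|$ vertices and hence $\beta_1(\mathcal G'[S])\geq 1$. Distinct iSCCs are vertex-disjoint, so their cycle spaces are independent subspaces of that of $\mathcal G'$, whence $\beta_1(\mathcal G')\geq\sum_k n_k$ and $|\mathcal E'|\geq N-1+\sum_k n_k$.

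For the upper bound I would build an optimal $\mathcal G'$ one component at a time. Inside $\mathcal N^k$, choose for every vertex one out-neighbour $\sigma(v)\in\mathcal N_v^+$ by a reverse breadth-first search toward target sets, run component-by-component down the condensation of $\mathcal G^k$: within a non-sink component route toward its exit vertices (those having an arc leaving the component), within a one-sided sink component route toward a shortest directed cycle (of length at least three), and within a two-sided sink component route toward a fixed bidirectional pair. The resulting functional subgraph $F_k=\{(v,\sigma(v))\}$ then realises exactly one directed cycle in each sink component and no other short cycle, so its underlying undirected graph has $|\mathcal N^k|-m_k$ edges and $n_k+m_k$ connected components, where $m_k$ is the number of two-sided sink components; adding $n_k+m_k-1$ edges of $\bar{\mathcal E}$ between those components restores connectedness of $\mathcal N^k$ while preserving out-coverage and costs $(|\mathcal N^k|-m_k)+(n_k+m_k-1)=|\mathcal N^k|+n_k-1$ edges. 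Joining the $C$ pieces with $C-1$ further edges of $\mathcal G$ yields a connected spanning subgraph with $\sum_k(|\mathcal N^k|+n_k-1)+(C-1)=N-1+\sum_k n_k$ edges, meeting the lower bound.

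The step I expect to be the main obstacle is the pure-versus-mixed dichotomy for sink components and the bookkeeping it forces: one must see that a two-sided sink component imposes no extra edge — its bidirectional pair lets the whole component be spanned by a tree while still meeting out-coverage — so that the $m_k$ terms cancel and only the iSCC count survives; and the reverse breadth-first routing must be arranged carefully enough that $F_k$ exhibits precisely the sink components as its directed cycles and precisely $m_k$ bidirectional pairs, with no accidental extra undirected cycle.
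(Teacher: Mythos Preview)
Your approach differs substantially from the paper's. For the lower bound the paper simply asserts that each connected component of $\bar{\mathcal G}$ needs ``an edge for each of its elements, combined with edges connecting its $n_k$ iSCCs''; you instead give a genuine cycle-space argument, showing that each iSCC forces an independent undirected cycle in $\mathcal G'$ so that $\beta_1(\mathcal G')\ge\sum_k n_k$. For the upper bound the paper builds, inside each iSCC, a spanning tree plus one closing edge and then grows outward layer by layer; your functional-graph construction via reverse BFS is the same idea organised more carefully, and your explicit $m_k$-bookkeeping for two-sided sink components has no counterpart in the paper at all.

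The gap is the line ``consequently the iSCCs of $\mathcal G^k$ are exactly its sink strongly connected components all of whose vertices lie on one side of $\bar\omega$.'' Under the standard meaning of iSCC in the consensus literature (a root component of the condensation, i.e.\ a source SCC), this does not follow from your structural facts, and in fact the formula itself then fails: take $N=2$ with $\omega_1<\bar\omega<\omega_2$ and the single edge satisfying $\Delta_{12}\in(0,\pi)$; both arcs $1\to 2$ and $2\to 1$ are present, the whole graph is one SCC which is trivially source and sink yet two-sided, so $n_1=1$ and the formula predicts two edges while one edge clearly suffices. Your lower bound uses precisely the two properties you impute to an iSCC $S$---that $S$ is a sink (so the out-coverage map $\sigma$ sends $S$ into itself) and that $S$ is $2$-cycle-free (so $v\mapsto\{v,\sigma(v)\}$ is injective)---and collapses without them. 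What you have really established is the formula with $n_k$ redefined as the number of one-sided sink SCCs; this may well be the intended reading (the paper's one-line lower bound does not account for edges shared by a $2$-cycle either, so it has the same defect), but you should state and defend the definition of iSCC you rely on rather than presenting a false identification as a consequence of the structural facts.
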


\begin{proof}
First of all, note that to satisfy the necessary and sufficient condition for the subgraph $\mathcal{G}'$, for each $i \in \mathcal{N}$, we have to select at least one edge $(i, j)$ such that $j \in \mathcal{N}_i^+$.
Moreover, we have to select additional edges to maintain connectivity of the network $\mathcal{G}'$.

In this sense, it is necessary to have these connected components connected by an edge contained in $\mathcal{E} \setminus \bar{\mathcal{E}}$.
Therefore, we additionally need at least $C-1$ edges.
Now, for each connected component, we should find a subgraph with a minimal number of connections that satisfies the above conditions.
To achieve this, the subgraph for each connected component should at least have an edge for each of its elements, combined with edges connecting its $n_k$ iSCCs, for a total no less than $|\mathcal{N}^k| + n_k - 1$.
Therefore, we prove our claim, so long as we can construct a subgraph that has exactly the number of edges described here.

Since the original graph $\mathcal{G}$ is connected, we can always find such $C-1$ edges.
In particular, if we construct a new graph, where each connected component is considered as a single node, then it should be connected, and thus, we can find an undirected spanning tree.
We just have to pick any single edge that corresponds to an edge of this spanning tree.

Now, for each connected component, 
a network with a minimal number of connections can be found as follows.
Take any agent in the iSCC, and find a spanning tree that has its root as the agent, then simply connect one edge coming from any other agent to the root agent.
For the remaining agents, we construct a network step by step.
First, we connect all agents that have as neighbor a member of any iSCC with only one edge per agent. Then, we connect all the agents that have their neighbor as a member of this first layer by only one edge per agent and repeat.
If there is a remaining agent, then it is not connected to any of the agents thus far.
But, this means that when considering the undirected graph that is obtained by making the edges symmetric, it is no longer a single connected component, which is a contradiction.
\end{proof}

The minimum energy solution is identical to what was presented in Theorem~\ref{thm:min_energ}.

\subsection{Simulations}

Simulations of consensus dynamics on the circle were performed in C++, using the classic Runge-Kutta method for discrete time steps and prototype attractive coupling function $f_*(\theta) = \tan(\theta/2)$.
The performance of least communication solutions are demonstrated here for tree graphs containing seven agents, and with attractive couplings only. A ``balanced'' target formation is constructed with all neighboring $\Delta_{ij} = 2\pi/7$, and a ``clustered'' formation is defined with $\Delta_{12} = 0.1$, $\Delta_{23} = 1.3$, $\Delta_{34} = \Delta_{45} = 0.2$, $\Delta_{56} = 1.3$, $\Delta_{67} = 0.1$, both of which are represented in Figure \ref{fig:formations}. Pseudorandom initial positions were chosen, in accordance with the almost-everywhere convergence property for tree graphs. For both simulations a target frequency of $\bar{\omega} = 0.1$ was selected, with intrinsic frequencies $\omega_i = \{-0.6,-0.4,-0.2,0,0.2,0.4,0.6 \}$ chosen to demonstrate a nontrivial frequency convergence.

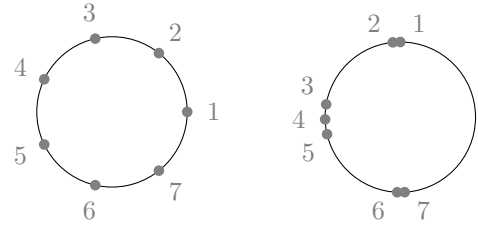
\begin{figure}
  \begin{center}
      \vspace{0cm}
    \subfigure{
    \begin{tikzpicture}[thick,decoration={
    markings,
    mark=at position 0.5 with {\arrow{>}}},
    ] 
        \coordinate (O) at (2,4);
        \def\radius{1cm}
      
        \draw[thin] (O) circle[radius=\radius];
        
        \path (O) ++(0:\radius) coordinate (a);
        \path (O) ++(360/7:\radius) coordinate (b);
        \path (O) ++(720/7:\radius) coordinate (c);
        \path (O) ++(360*3/7:\radius) coordinate (d);
        \path (O) ++(360*4/7:\radius) coordinate (e);
        \path (O) ++(360*5/7:\radius) coordinate (f);
        \path (O) ++(360*6/7:\radius) coordinate (g);
        
        \fill[gray] (a) circle[radius=2pt] ++(0:1em) node {$1$};
        \fill[gray] (b) circle[radius=2pt] ++(360/7:1em) node {$2$};
        \fill[gray] (c) circle[radius=2pt] ++(360*2/7:1em) node {$3$};
        \fill[gray] (d) circle[radius=2pt] ++(360*3/7:1em) node {$4$};
        \fill[gray] (e) circle[radius=2pt] ++(360*4/7:1em) node {$5$};
        \fill[gray] (f) circle[radius=2pt] ++(360*5/7:1em) node {$6$};
        \fill[gray] (g) circle[radius=2pt] ++(360*6/7:1em) node {$7$};

    \end{tikzpicture}}
    \hspace{10pt}
    \subfigure{
        \begin{tikzpicture}[thick,decoration={
    markings,
    mark=at position 0.5 with {\arrow{>}}},
    ] 
        \coordinate (O) at (2,4);
        \def\radius{1cm}
      
        \draw[thin] (O) circle[radius=\radius];
        
        \path (O) ++(90:\radius) coordinate (a);
        \path (O) ++(95.73:\radius) coordinate (b);
        \path (O) ++(170.21:\radius) coordinate (c);
        \path (O) ++(181.67:\radius) coordinate (d);
        \path (O) ++(193.13:\radius) coordinate (e);
        \path (O) ++(267.62:\radius) coordinate (f);
        \path (O) ++(273.35:\radius) coordinate (g);
        
        \fill[gray] (a) circle[radius=2pt] ++(45:1em) node {$1$};
        \fill[gray] (b) circle[radius=2pt] ++(135:1em) node {$2$};
        \fill[gray] (c) circle[radius=2pt] ++(135:1em) node {$3$};
        \fill[gray] (d) circle[radius=2pt] ++(180:1em) node {$4$};
        \fill[gray] (e) circle[radius=2pt] ++(225:1em) node {$5$};
        \fill[gray] (f) circle[radius=2pt] ++(225:1em) node {$6$};
        \fill[gray] (g) circle[radius=2pt] ++(315:1em) node {$7$};
        
    \end{tikzpicture}}
    \end{center}
    \caption{Target formations on the circle for balanced (left) and clustered (right) formations.}
    \label{fig:formations}
    \vspace{0cm}
\end{figure}

\begin{figure}
    \begin{center}
        \hspace{0pt}
        \includegraphics[width=\linewidth]{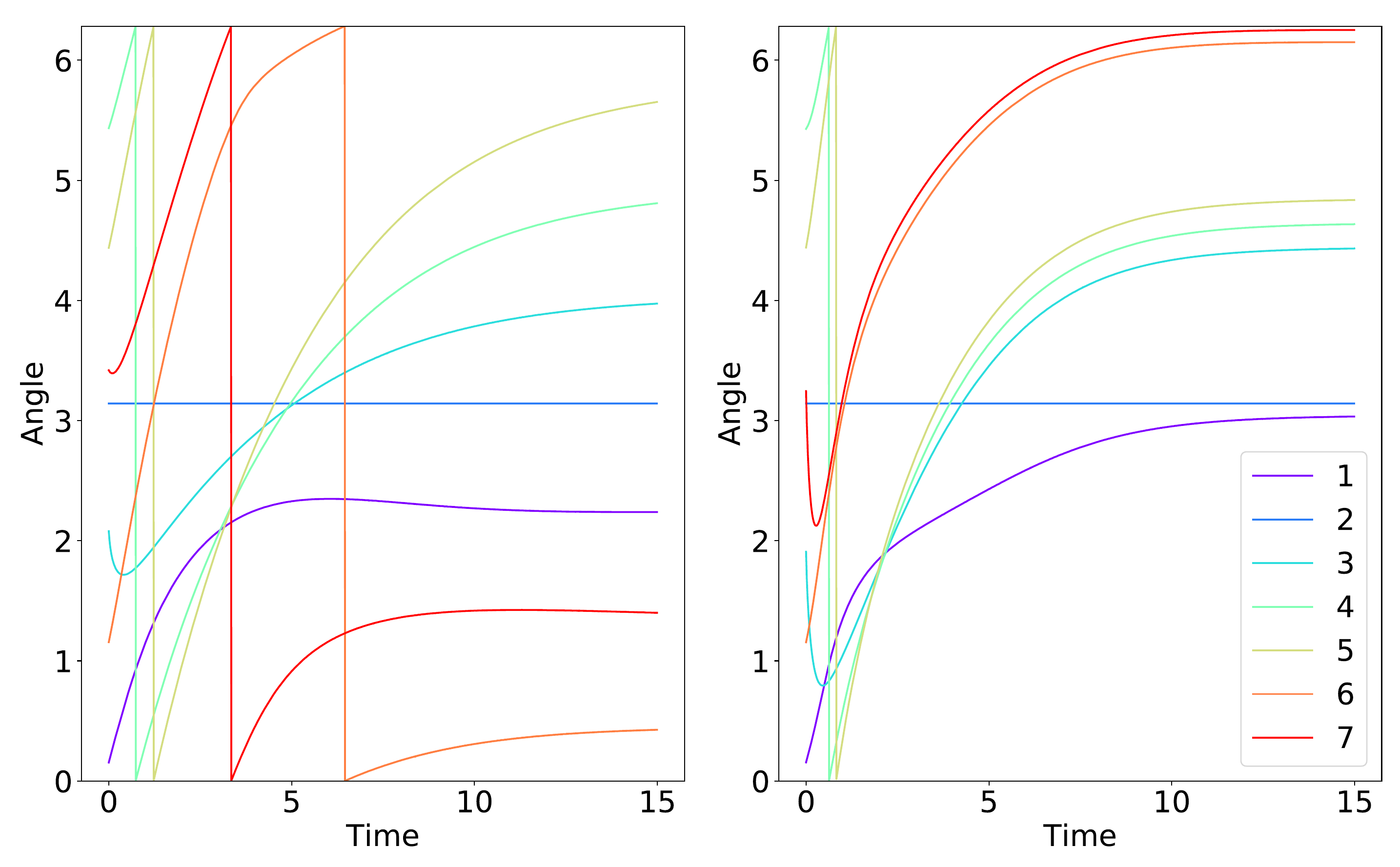}
    \end{center}
    \caption{Position (relative to agent 2) of seven evolving agents for balanced (left) and clustered (right) target formations with dynamics given by the least communication solution.}
    \label{fig:sims}
    \vspace{0cm}
\end{figure}

As shown in Figure \ref{fig:sims}, simulations converge to the target formations with similar convergence times. The coupling coefficients computed for each solution are given below. For the simulations shown, arbitrarily small $\epsilon = 0.01$ was inserted where necessary to ensure bidirectionality, and was confirmed to have a negligible impact on dynamics.

\begin{equation*}
    \left[ \alpha_{ij} \right]^{balanced} = \begin{bmatrix}
        0       & 0     & 0.558 & 0     & 0     & 0     & 0  \\
        0       & 0     & 1.038 & 0     & 0     & 0     & 0  \\
    \epsilon & \epsilon & 0     & 0.623 & 0  & \epsilon & 0  \\
        0       & 0  & \epsilon & 0     & 0.208 & 0     & 0  \\
        0       & 0     & 0     & 0.208 & 0     & 0     & 0  \\
        0       & 0     & 0.068 & 0     & 0     & 0     & \epsilon     \\
        0       & 0     & 0     & 0     & 0     & 1.038 & 0 \\
    \end{bmatrix}
\end{equation*}
\begin{equation*}
    \left[ \alpha_{ij} \right]^{clusters} = \begin{bmatrix}
        0       & 0     & 0.831 & 0     & 0     & 0     & 0 \\
        0       & 0     & 0.658 & 0     & 0     & 0     & 0 \\
    \epsilon & \epsilon & 0     & 2.990 & 0  & \epsilon & 0 \\
        0       & 0  & \epsilon & 0     & 0.997 & 0     & 0 \\
        0       & 0     & 0     & 0.997 & 0     & 0     & 0 \\
        0       & 0     & 0.264 & 0     & 0     & 0     & \epsilon \\
        0       & 0     & 0     & 0     & 0     & 9.991 & 0 \\
    \end{bmatrix}
\end{equation*}

\section{Conclusion}

We have presented a natural generalization of the positivity-based convergence analysis of linear consensus algorithms to target formations on the circle. The approach is based on monotonicity and uses barrier functions to ensure forward invariance of differentially positive dynamics, thereby guaranteeing almost global convergence to limit cycles. By using coupling functions that guarantee convergence of the dynamics, the problem of shaping the collective nonlinear dynamics to a target formation reduces to one of algebra and graph theory. In future work, we hope to relax the barrier conditions on the coupling functions and determine criteria for achieving forward invariance for prescribed sets of initial conditions. Partial relaxations of the monotonicity requirement and analysis of the effects of isolated points of non-smoothness in the dynamics in the absence of forward invariance remain as open problems.

\section{ACKNOWLEDGMENTS}

C.M. was supported by Fitzwilliam College and a Henslow Research Fellowship from the Cambridge Philosophical Society.
G.V.G. was supported by the UCL Centre for Doctoral Training in Data Intensive Science funded by STFC, and by an Overseas Research Scholarship from UCL.
J.G.L. and R.S. acknowledge support from the European Research Council under the Advanced ERC Grant Agreement Switchlet n.670645.


\bibliographystyle{IEEEtran}
\bibliography{references}

\end{document}